\newcommandx{\unsure}[2][1=]{\todo[linecolor=red,backgroundcolor=red!25,bordercolor=red,#1]{#2}}
\newtheorem{thm}{Theorem}[section]
\newtheorem{Lem}[thm]{Lemma}
\newtheorem{prop}[thm]{Proposition}
\newtheorem{rmk}[thm]{Remark}
\newtheorem{lem-df}[thm]{Lemma-Definition}
\newtheorem{rmk-df}[thm]{Remark-Definition}
\newtheorem*{thm*}{Theorem}
\newcommand{\beq}{\begin{equation}}
\newcommand{\enq}{\end{equation}}
\newcommand{\beqn}{\begin{equation*}}
\newcommand{\enqn}{\end{equation*}}
\newcommand{\mD}{\mathbb{D}}
\newcommand{\caE}{\mathcal{E}}
\newcommand{\caF}{\mathcal{F}}
\newcommand{\caH}{\mathcal{H}}
\newcommand{\Z}{\mathbb{Z}}
\newcommand{\C}{\mathbb{C}}
\newcommand{\PP}{\mathbb{P}}
\newcommand{\OO}{\mathcal{O}}
\newcommand{\M}{\mathcal{M}}
\newcommand{\A}{\mathcal{A}}
\newcommand{\cH}{\mathcal{H}}
\newcommand{\m}[1]{\mathcal{#1}}
\newcommand{\pder}[1]{\frac{\partial}{\partial #1}}
\DeclareMathOperator{\Alb}{Alb}
\DeclareMathOperator{\alb}{alb}
\DeclareMathOperator{\Sym}{Sym}
\DeclareMathOperator{\h}{h}
\DeclareMathOperator{\Ker}{Ker}
\DeclareMathOperator{\Id}{Id}
\DeclareMathOperator{\Coker}{Coker}
\DeclareMathOperator{\Nm}{Nm}
\DeclareMathOperator{\Res}{Res}
\DeclareMathOperator{\Deg}{Deg}
\DeclareMathOperator{\dPE}{\mbox{\textit{d}}_o \textit{P}_E}
\DeclareMathOperator{\dP}{\mbox{\textit{d}}_o \textit{P}}
\DeclareMathOperator{\dPEv}{\mbox{\textit{d}}_o \textit{P}_E^{\vee}}
\DeclareMathOperator{\dPv}{\mbox{\textit{d}}_o \textit{P}^{\vee}}
\DeclareMathOperator{\spr}{\hat{\otimes}}
\begin{document}

\title{Covering of elliptic curves and\\ 
the kernel of the Prym map}
\date{\today}

\author{Filippo F. Favale}
\address[Filippo F. Favale]{Department of Mathematics, University of Trento, via Sommarive 14,
I-38123 Trento, Italy}
\email{filippo.favale@unitn.it}
\author{Sara Torelli}
\address[Sara Torelli]{Department of Mathematics, University of Pavia, Via Ferrata, 5,
I-27100 Pavia, Italy}
\email{sara.torelli02@ateneopv.it}

\subjclass[2010]{14H40, 14H30, 14B10, 14J99} 


\begin{abstract}
Motivated by a conjecture of Xiao, we study families of coverings of elliptic curves and their corresponding Prym map $\Phi$. More precisely, we describe the codifferential of the period map $P$ associated to $\Phi$ in terms of the residue of meromorphic $1$-forms and then we use it to give a characterization for the coverings for which the dimension of $\Ker(dP)$ is the least possibile. This is useful in order to exclude the existence of non isotrivial fibrations with maximal relative irregularity and thus also in order to give counterexamples to the Xiao's conjecture mentioned above. The first counterexample to the original conjecture, due to Pirola, is then analysed in our framework.
\vspace{4mm}

\end{abstract}

\maketitle  


\section*{Introduction}
\label{Intro}

\noindent Hurwitz spaces were classically introduced and studied by Clebsh and Hurwitz (see \cite{Cle} and \cite{Hur}) as spaces parametrizing branched coverings of $\PP^1$. Nowadays, the term Hurwitz space refers to a variety which parametrizes, up to equivalence, coverings $\pi:F\rightarrow E$ of curves with some geometric restrictions. In this article we will use a local version of Hurwitz spaces, namely a {\it local family of coverings}, whose seminal idea can be found in \cite{Kan}. Roughly, given a fixed covering $\pi:F\rightarrow E$ where $E$ is an elliptic curve, one is able to construct a map $p:\caF\rightarrow \caE$ of curves over $\cH$, where $\cH$ is a contractible open set. Then $\cH$ is a parameter space for smooth coverings which share the same degree and the same ramification indices with $\pi$.
\vspace{2mm}

\noindent Attached to a local family of coverings $p:\caF\rightarrow \caE$ with parameter space $\cH$ there is the Prym map $\Phi$, which associates to a $b\in\cH$ the generalized Prym variety of $\pi_b=\pi|_{\caF_b}:\caF_b\rightarrow \caE_b$, i.e.  the connected component containing $0$ of the kernel of the norm map $\Nm(\pi_b)$. The Prym map is, in some sense, the analogous of the Torelli map $T$ from $\M_g$, the moduli space of curves of genus $g$, to $\A_g$, the moduli space of principally polarized abelian varieties of dimension $g$.
\vspace{2mm}

\noindent A celebrated theorem, the infinitesimal Torelli theorem, states that the differential of the Torelli map is injective outside the hyperelliptic locus of $\M_g$ and it should be interesting to have a similar theorem also for Prym maps or, at least, to their lifting $P$ to a period domain. We will show that, in our case, i.e. when the base $E$ is an elliptic curve, the dimension of the kernel of $\dPv$ is at least $1$ as a consequence of how the local families that we will use are constructed. Roughly, by composing a covering with a traslation of the base we always have coverings with the same Prym, so there is a tangent direction in the parameter space along which the Prym map is constant. Hence a question analogous to the one answered by the infinitesimal Torelli is
\vspace{2mm}

\begin{center}
{\em Given a family of coverings with central fiber $\pi$, which conditions\\ 
can we put on $\pi$ in order to have that $\Ker(\dP)$ has dimension $1$?}
\end{center}

\noindent It is already known that an infinitesimal Torelli-like theorem for the Prym map cannot hold without restrictions as there are examples of coverings $\pi:F\rightarrow E$ (moreover with $F$ non hyperelliptic) for which there are two independent directions along which $\dP$ is $0$. One of these examples, due to Pirola, will be analyzed in Section \ref{SEC:4}. This paper is devoted to the study of the Prym map $\Phi:\cH\rightarrow \A$ in the cases for which $\cH$ parametrizes coverings over an elliptic curve. 
\vspace{2mm}

\noindent A further motivation to study this kind of problems comes from a conjecture about fibered surfaces. Recall that, given a fibration $f:S\rightarrow B$ of a smooth compact surface $S$ over a smooth compact curve $B$, the {\em relative irregularity} $q_f$ is defined to be the difference $q(S)-g(B)$. A modified version of a conjecture of Xiao states that, if $f$ is not isotrivial, then
\begin{equation}
q_f\leq \left\lceil\dfrac{g+1}{2}\right\rceil.
\end{equation}
The original conjecture was without the round up and has been modified after a counterexample of Pirola, the one that we will present in Section \ref{SEC:4}. To have an insight of what is known about the relative irregularity and about recent results about an upper bound a good reference is \cite{BGN}. The link between the world of non isotrivial fibrations and the one of the families of coverings we will define is broadly given as follows. The fibration $f$ induces a surjective map $\alb(f):\Alb(S)\rightarrow \Alb(B)=J(B)$ with $\dim(\Ker(\alb(f)))=q_f$, which has a connected component containing $0$. We shall denote it with $K_f$. If $B^0$ is the open subset of $B$ over which the fibration has smooth fibers, we denote by $F_b$ the fiber over $b\in B^0$. Via the map $F_b\hookrightarrow S$ we have a map $JF_b\rightarrow \Alb(S)$ whose image is, up to translation, exactly $K_f$. Dualizing we have a map
$$
\xymatrix{
K_f^{\vee}\ar@{^{(}->}[r] & JF_b^{\vee}=JF_b
}$$
Note that $K_f^{\vee}$ doesn't depend on $b$ whereas $F_b$ strongly depends on it. In particular we have proved that the Jacobian of every smooth fiber of a non isotrivial fibration contains a fixed abelian variety of dimension $q_f$. Assume now that we are in an extreme case, i.e., assume that $q_f=g-1$. Since in this article we are only interested in non isotrivial fibrations, we will call fibration with {\em maximal relative irregularity} those with $q_f=g-1$. In fact, every fibration satisfies $0\leq q_f\leq g$ and the equality $q_f=g$ holds if and only if the fibration is trivial (this follows from a result of Beauville: see the appendix of \cite{Deb} for details).
In this case $\dim(K_f^{\vee})=q_f=g-1$ and we can consider the quotient $JF_b/K_f^{\vee}$ which will be an abelian variety of dimension $g-q_f=1$: an elliptic curve $E_b$. 
$$
\xymatrix{
\Phi(\pi_b)=K_f^{\vee}\ar@{^{(}->}[r] & JF_b\ar@{->>}[r]^-{p_b} & E_b \\
& F_b \ar[ur]_-{\pi_b}\ar[u]\ar[r]_-{\pi_b} & E_b\ar@{=}[u]
}$$
Moreover, in this case $K_f^{\vee}$ is the connected component through the origin of the kernel of the norm map associated to  the ramified covering $\pi_b:F_b\rightarrow E_b$, i.e. the Prym variety $\Phi(\pi_b)$.
Hence, an eventual counterexample to the modified version of the conjecture of Xiao, under the additional assumption $q_f=g-1$, would give a family of coverings of elliptic curves with constant Prym variety. At the moment the question
\begin{center}
{\em Is there a non isotrivial fibration (with maximal relative irregularity $q_f$ or not)\\ giving a counterexample to the modified Xiao's conjecture?}
\end{center}
is still completely open but, by answering precisely to our first question one should be able to construct counterexamples or to prove that, at least for the case of maximal relative irregularity, such examples cannot exist. It is worth to mention that, by the original work of Xiao (see \cite{Xia}), a non isotrivial fibration with maximal relative irregularity can exist only if $g\leq 7$. 
\vspace{2mm}

\noindent The paper is organized as follows. In Section \ref{SEC:1} we recall some definitions and facts about Prym varieties associated to ramified coverings and Prym maps that we are going to use extensively in what follows. In Section \ref{SEC:2} we will extend the techniques developed in \cite{Kan} for coverings with simple ramification to the case of arbitrary one. The main result is this theorem
\begin{thm*}[\ref{THM:MAINCOMP}]
With the notations of section \ref{SEC:2}, for any $\varphi\in \Sym^2(H^0(\omega_F)^{-})$ we have
\begin{equation}
\dPv(\varphi)=\sum_{j=1}^{n}\Res_{a_j}\left(\dfrac{m(\varphi)}{\pi^*\alpha}\right) dt_j+\left(\sum_{k=0}^n\dfrac{m(\varphi)}{\pi^*\alpha^2}(x_k)\right)ds.
\end{equation}
\end{thm*}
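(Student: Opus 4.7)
The plan is to combine the Griffiths-theoretic description of $dP$ with an explicit local residue computation at the ramification points of $\pi$. First I would invoke that, by Griffiths transversality, for any $v \in T_b\cH$ the differential $dP(v) \in \Hom(H^0(\omega_F)^{-}, H^1(\OO_F)^{-})$ is cup product with the Kodaira-Spencer class $\rho(v) \in H^1(T_F)^{-}$. Dualising and using Serre duality $H^1(\OO_F)^{-\vee} \cong H^0(\omega_F)^{-}$, for $\varphi = \omega_1\cdot\omega_2 \in \Sym^2(H^0(\omega_F)^{-})$, with product $m(\varphi) = \omega_1\omega_2 \in H^0(\omega_F^{2})^{+}$, and any $v \in T_b\cH$ one obtains
$$\langle \dPv(\varphi), v \rangle \;=\; \langle \rho(v),\, m(\varphi) \rangle_{SD},$$
where $\langle\,\cdot\,,\,\cdot\,\rangle_{SD}$ is the Serre duality pairing $H^1(T_F) \times H^0(\omega_F^{2}) \to \C$. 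The task thus reduces to identifying $\rho(v)$ for the coordinate vectors $\partial/\partial t_j$ and $\partial/\partial s$ and evaluating the resulting pairing.

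Second, I would write down explicit \v{C}ech representatives for these Kodaira-Spencer cocycles. By the construction of the local family $p\colon \caF \to \caE$, each parameter $t_j$ controls the position of a branch point $a_j$ on $E$, so $\rho(\partial/\partial t_j)$ admits a representative supported in an arbitrarily small neighbourhood of $\pi^{-1}(a_j) \subset F$. The remaining direction $s$ encodes a global deformation (encoding the variation of the elliptic structure on $E$ compatibly with the covering), and its Kodaira-Spencer class is supported near all of the ramification points. In either case, the local cocycle can be written in closed form in terms of a uniformizer at a ramification point and of $\pi^*\alpha$, which generates $\omega_F$ away from the ramification locus.

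Third, I would evaluate the Serre pairing through the standard residue description. Since $m(\varphi) \in H^0(\omega_F^{2})$ and $\pi^*\alpha$ is a section of $\omega_F$, the quantities $m(\varphi)/\pi^*\alpha$ and $m(\varphi)/(\pi^*\alpha)^{2}$ are respectively a meromorphic $1$-form and a meromorphic function on $F$. Pairing them against the \v{C}ech cocycles of the previous step, the residue formula for Serre duality converts $\langle \rho(\partial/\partial t_j), m(\varphi)\rangle_{SD}$ into the residue of $m(\varphi)/\pi^*\alpha$ at $a_j$, and $\langle \rho(\partial/\partial s), m(\varphi)\rangle_{SD}$ into a sum of evaluations of $m(\varphi)/(\pi^*\alpha)^{2}$ at the ramification points $x_k$. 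Reassembling, one recovers the two sums appearing in the statement.

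The principal technical obstacle is the local analysis at a ramification point of arbitrary index $e \ge 2$. Kanev \cite{Kan} treats only the simple case $e=2$, in which $\pi^*\alpha$ has a simple zero and the local computation is essentially immediate; for general $e$, however, $\pi^*\alpha$ vanishes to order $e-1$, so a priori $m(\varphi)/\pi^*\alpha$ and $m(\varphi)/(\pi^*\alpha)^{2}$ could develop poles of large order. The key input is the anti-invariance of $\omega_1,\omega_2$ with respect to the local monodromy around the branch point, which forces $m(\varphi)$ to vanish to sufficiently high order at each ramification point. This ensures that $\Res_{a_j}(m(\varphi)/\pi^*\alpha)$ is well defined and that $m(\varphi)/(\pi^*\alpha)^{2}$ is regular at each $x_k$, and verifies that the assembled local contributions match the Serre pairing globally, yielding the claimed formula.
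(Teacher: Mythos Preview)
Your overall strategy---identify $dP$ via the Kodaira--Spencer map and evaluate the Serre pairing by residues---is exactly what the paper does, and your first two paragraphs are on target. But the third and fourth paragraphs contain a genuine misidentification that would derail the computation.

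The points $x_k$ are \emph{not} ramification points. In the paper's setup the parameter $s$ is a Schiffer variation of $E$ centred at a point $c\in E$ chosen away from the branch locus, and $x_1,\dots,x_d$ are the $d$ (distinct, unramified) preimages $\pi^{-1}(c)$. Hence the Kodaira--Spencer class of $\partial/\partial s$ is supported near the $x_k$, where $\pi^*\alpha$ does \emph{not} vanish; the quantity $m(\varphi)/(\pi^*\alpha)^2$ is therefore automatically regular at each $x_k$, and no vanishing hypothesis on $m(\varphi)$ is needed. This is also why the paper can simply cite Kanev's computation for the $ds$ term verbatim: it takes place away from ramification and is insensitive to the ramification indices.

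Your proposed ``key input''---that $\omega_1,\omega_2\in H^0(\omega_F)^-$ are anti-invariant under local monodromy and hence $m(\varphi)$ vanishes to high order at the $a_j$---is not correct and not needed. The superscript ${}^-$ denotes the kernel of the trace map $tr_\pi$, a global linear condition; the covering is not assumed Galois, so there is no monodromy action on forms, and trace-zero forms need not vanish at ramification points. For the $dt_j$ term, $\Res_{a_j}(m(\varphi)/\pi^*\alpha)$ is well defined for any meromorphic $1$-form regardless of pole order, so again no extra vanishing is required. The only place where arbitrary ramification genuinely enters is the explicit \v{C}ech computation of $KS(\partial/\partial t_j)$: the paper works this out directly, obtaining the cocycle $\frac{1}{n_j z_j^{\,n_j-1}}\partial/\partial z_j$ on $U_0\cap U_j$, and pairs it with $\varphi$ to get $\Res_{a_j}(\varphi/\pi^*\alpha)$. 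That local calculation, not any vanishing of $m(\varphi)$, is what generalises Kanev's simple-ramification case.
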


\noindent that describe the (dual of the) differential of the Period map in terms of residues of some meromorphic forms. In Section \ref{SEC:3}, given a covering $\pi:F\rightarrow E$ and assuming that $F$ is not hyperelliptic, we prove Theorem \ref{THM:HALFGEO}, a geometric criterion on the canonical model $F$ that is a sufficient condition in order to have $\dim(\Ker(\dP))=1$. Finally, in Section $\ref{SEC:4}$, we analyze in our framework the family that was constructed in \cite{Pir}. We will prove, using our framework, that the existence of the family is consistent with our Theorem as well as other interesting geometric aspects that may suggest a different way to approach, in the future, the problem of finding an answer to the second question by starting from the geometry of canonical models.
\vspace{2mm}

\noindent {\bf Acknowledgements.} The authors are both supported by INdAM - GNSAGA. The first author is supported by FIRB2012 "Moduli spaces and applications" and by IMUB during his stay at the University of Barcelona. The seminal ideas at the base of this article were born in the framework of the PRAGMATIC project of year 2016. We would like to thank Miguel {\'A}ngel Barja, V\'ictor Gonz\'alez-Alonso, Juan Carlos Naranjo and Gian Pietro Pirola for introducing us to the subject and for all the helpful remarks on the problem and the manuscript.


\section{Some preliminaries}
\label{SEC:1}

\noindent In this section we recall some definitions that we are going to use in the following sections.
\vspace{2mm}

\noindent Let $F,E$ be two smooth curves of genus $g\geq 2$ and $1$ respectively and consider the covering $\pi:F\rightarrow E$. One can consider the Albanese variety associated to $F$, which coincides with its Jacobian, because $F$ is a curve. Namely
\begin{equation}
J(F)=\dfrac{H^0(\omega_F)^{\vee}}{H_1(F,\Z)}=\Alb(F)=\dfrac{H^1(\OO_F)}{H^1(F,\Z)}.
\end{equation}
This is a principally polarized abelian variety of dimension $g$. As $E$ has genus $1$ we have $E=J(E)=\Alb(E)$. By the universal property of $\Alb(F)$ there is a map $\alb(\pi)$ such that the diagram 
$$\xymatrix{
F \ar@{->>}[r]^{\pi}\ar[d]\ar@{}[dr]|-{\circlearrowleft}& E\ar[d]^{=} \\
JF \ar@{->>}[r]_{\alb(\pi)}& JE
}$$
commutes, where the map $F\rightarrow JF$ is the Albanese map of $F$, also called the Abel-Jacobi map. The map $\alb(\pi)$ is also called the \textit{norm map of $\pi$}, $Nm_{\pi}$, and it is surjective. The \textit{generalized Prym variety} associated to $\pi:F\rightarrow E$ (or simply \textit{Prym variety}) is the connected component of $\Ker(\alb(\pi))$ that contains the $0$, i.e.
\begin{equation}
P(\pi)=\Ker(\alb(\pi))_0.
\end{equation}
$P(\pi)$ is an abelian variety of dimension $g-1$ with a natural polarization $\Theta_P$ given by $\Theta_{JF}|_P$ via the embedding 
$$
\xymatrix{P(\pi) \ar@{^{(}->}[r]& JF.}
$$
The map $\pi:F\rightarrow E$ induces a map $tr_{\pi}:H^0(\omega_F)\rightarrow H^0(\omega_E)$ called the \textit{trace} of $\pi$ (see Appendix A of \cite{Kan} for the definition). The trace satisfies $$tr_{\pi}\circ \pi^*=\Deg(\pi)\Id_{H^0(\omega_E)}.$$
If we define 
\begin{equation}
H^0(\omega_F)^-=\Ker(tr_{\pi})
\end{equation}
we have a canonical splitting 
\begin{equation}
H^0(\omega_F)=\pi^*H^0(\omega_E)\oplus H^0(\omega_F)^-
\end{equation}
and we can identify the quotient $H^0(\omega_F)/\pi^*H^0(\omega_E)$ with $H^0(\omega_F)^-$. In particular, the tangent bundle of $P(\pi)$ can be described as
\begin{equation}
T P(\pi) = \left(\dfrac{H^0(\omega_F)}{\pi^*H^0(\omega_E)}\right)^{\vee}\otimes \OO_{P(\pi)} = (H^0(\omega_F)^-)^{\vee}\otimes \OO_{P(\pi)}. 
\end{equation}
\vspace{2mm}

\noindent Now we will introduce the families of coverings of elliptic curves we are interested in. Fix a smooth curve $F$ of genus $g\geq 2$ and consider a degree $d$ covering $\pi:F\rightarrow E$, where $E$ is an elliptic curve. Denote with $$R=\sum_{j=1}^n (n_j-1)a_j$$ the ramification divisor and call $b_j$ the branch point corresponding to the ramification point $a_j$, i.e. $\pi(a_j)=b_j$. Thus $n_j$ is the degree of $\pi$ when restricted to a suitable neighborhood of $a_j$. 
\vspace{2mm}

\noindent Fix a generator $\alpha$ of $H^0(\omega_E)$. Choose a suitable set $\{\Delta_j\}$ of coordinate neighborhoods centered in the points $b_j$ and call $w_j$ the corresponding coordinate on $E$.  This is not needed at the moment but observe that we can assume that $\alpha|_{\Delta_j}=dw_j$. We can chose a collection of pairwise disjoint coordinate neighborhoods $(U_j,z_j)$ centered in $a_j$ in such a way that $w_j=\pi|_{U_j}(z_j)=z_j^{n_j}$.
\vspace{2mm}

\noindent Denote by $\caH_E$ the polydisc 
$\Pi_{j=1}^{n}\Delta_j$ and consider the coordinates $t=(t_j)_{j=1}^n$  defined by the relation
$$t_j(P_1,\cdots, P_n)=w_j(P_j).$$
We can consider, as in Section 4.1 of \cite{Kan}, a family 
$$(\Psi,f):\caF\rightarrow E\times \caH_E$$
of $d$-sheeted branched coverings deforming $\pi$ parametrized by 
$\mathcal{H}_E$ such that
\begin{equation}
\label{EQ:LOCEXPROFPSI}
w_j=\Psi|_{U_j}(z_j,t)=z_j^{n_j}+t_j.
\end{equation}
In this way, to each $b'\in \cH_E$, it is associated a covering $\pi_{b'}:F_{b'}\rightarrow E$ which is a deformation of $\pi$, the central fiber. Note that (\ref{EQ:LOCEXPROFPSI}) forces the ramification orders to remain costant and allows different branch points to move indipendently. This is what we will call in the following {\it local family of coverings over $E$ with central fiber $\pi$ parametrized by $\cH_E$}.

\noindent The tangent space to $\cH_E$ in $b=(b_1,\dots,b_n)\in \cH_E$ is
$$T_{b}\cH_E\simeq \bigoplus_{j=1}^{n}T_{b_j}E\simeq  \bigoplus_{j=1}^{n}\C\pder{t_j},$$
where the tangent vectors on the right are evaluated in $0$. 
\vspace{2mm}

\noindent We can also take into account the deformation of the elliptic curve. Indeed, following \cite{ACG}, if one chooses $c\in E$ not among the $b_j$ and considers a small coordinate neighborhood $(N,v)$ of $c$ (eventually shrinking $\Delta_j$ in such a way that for all $j$ they are disjoint from $N$), one can consider the associated Schiffer variation $\m{E}\rightarrow N$ of $E$ with coordinate $s$. Observe that we can assume $\alpha|_{N}=dv$. Taking into account also the movement of the branch points one has a family $f:\caF\rightarrow \cH_E\times N$ of curves of genus $g$ that fits into the diagram
\begin{equation}
\label{EQ:LFC}
\xymatrix{
\caF\ar[d]_{f}\ar[r]^{p} & \caE\ar[d]\\
\cH_E\times N\ar[r] & N 
}
\end{equation}
For a choice $(b',s')\in \cH=\cH_E\times N$ we have an elliptic curve $E_{s'}$, the fiber of the map $\caE\rightarrow N$ over $s'$, a curve $F_{(b',s')}$ of genus $g$ and a covering $$\pi_{(b',s')}=p|_{F_{(b',s')}}:F_{(b',s')}\rightarrow E_{s'}.$$
For this reason, the map $p$ is what we will call {\it local family of coverings with central fiber $\pi$ parametrized by $\cH$} or, simply, {\it local family of coverings}.
The tangent space to $\cH$ in $(b,s)$ is 
$$T_{(b,s)}\cH\simeq  \left(\bigoplus_{j=1}^{n}\C\pder{t_j}\right)\oplus \C\pder{s}.$$
and, clearly, containts $T_{b}\cH_E$ in a natural way. We stress that, through the whole article, unless otherwise stated, we will always refer to the families of coverings constructed in this sections. 
\vspace{2mm}

\noindent If we have a family of coverings parametrized by $\cH$, for each $(b,s)$ we can construct the Prym variety associated to the covering. Moreover, the type of polarization remains constant. Hence we can consider the \textit{Prym map}
\begin{equation}
\xymatrix@R=5pt@C=30pt{
\cH \ar[r]^-{\Phi} & \A_{g-1} \\
(b,s) 
\ar@{|->}[r] & [P(\pi_{(b,s)})]
}
\end{equation}
where $\A_{g-1}$ is the moduli space of abelian varieties with polarization (which will be omitted) equal to the one of the central fiber. In the same way one has the Prym map $\Phi_E$ associated to a local family of coverings over $E$.
\vspace{2mm}

\noindent To avoid technical subtleties around singular points of $\A_{g-1}$, we will consider the period map $P:\cH\rightarrow \mD$ (or $P_E:\cH_E\rightarrow \mD$) instead of the Prym map $\Phi$ (respectively $\Phi_E$), where $\mD$ is a suitable period domain for $\A_{g-1}$. The interested reader is referred to \cite[Section 3]{Kan} for technical details.
\vspace{2mm}

\noindent Through the whole article, giving two sections $s_1,s_2\in H^0(\OO_X(D))$ we will write $s_1\spr s_2$ to mean their symmetric product, i.e.
$$\frac{1}{2}(s_1\otimes s_2+s_2\otimes s_1)\in \Sym^2(H^0(\OO_X(D))).$$
If $s_i\in H^0(\OO_X(D_i))$, $s_1\cdot s_2$ will mean the evaluation of $s_1\otimes s_2$ in $H^0(\OO_X(D_1+D_2))$ under the multiplication map.


\section{A direct formula for the codifferential of the Prym map}
\label{SEC:2}

\noindent In this section we will prove an explicit formula for the codifferential of the period map in terms of the residue at the ramification points of some forms. The framework is similar to the one in \cite{Kan} with the main difference being that we don't restrict ourselves to the case of simple ramification. First of all we introduce some notations.
\vspace{2mm}

\noindent Fix an elliptic curve $E$ and let $\pi:F\rightarrow E$ be a covering of $E$ with $F$ of genus $g$. Consider  
$$(\Psi,f):\caF\rightarrow E\times \caH_E,$$
the local family of coverings with fixed base $E$, central fiber $\pi$ and parameter space $\cH_E$ constructed in Section \ref{SEC:1}. By construction, it induces a family $f:\caF\rightarrow \cH_E$ with central fiber $\caF_o=F$. If we consider a minimal versal deformation $f':\caF'\rightarrow M$ of $F$ then the previous family is induced by $f'$ by means of a pullback. More precisely there exists a holomorphic map
$h_E:\cH_E\rightarrow M$ such that 
\begin{equation}
\label{EQ:COMMDIAGEFIX}
\xymatrix{
\caF\ar[d]_{f}\ar[r] & \caF'\ar[d]^{f'}\\
\cH_E\ar[r]_{h_E} & M 
}
\end{equation}
is commutative. Being $f'$ a minimal versal deformation we have $$T_oM\simeq H^1(T_F)\simeq H^0(\omega_F^{\otimes 2})^{\vee}.$$ Moreover, under this identification, if we take a tangent vector $v$ in $T_o\cH_E$ and evaluate $dh_E$ in $v$ we get the Kodaira-Spencer map $KS_E$ associated to $\caF\rightarrow \cH_E$ evaluated in $v$.
\vspace{2mm}

\noindent We are able to prove the first important part of Theorem \ref{THM:MAINCOMP}. 

\begin{prop}
\label{PROP:DH1}
Using the identifications introduced above, we have that
$$dh_E^{\vee}:T_{o}^{\vee}M\rightarrow T_{o}^{\vee}\cH_E$$
can be written as
\begin{equation}
dh_E^{\vee}(\varphi)=\sum_{j=1}^{n}\gamma_j dt_j\qquad\mbox{where }\qquad  \gamma_j=2\pi i \Res_{a_j}\left(\dfrac{\varphi}{\pi^*\alpha}\right)
\end{equation}
and $\varphi\in T_o^{\vee}M=H^0(\omega_F^{\otimes 2})$.
\end{prop}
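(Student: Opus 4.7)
The strategy is to compute the Kodaira--Spencer class $KS_E(\partial/\partial t_j) \in H^1(F, T_F)$ as a \v Cech cocycle and then pair it with $\varphi \in H^0(\omega_F^{\otimes 2})$ via the residue description of Serre duality. Since by construction $dh_E(v) = KS_E(v)$ under the identification $T_oM \simeq H^1(T_F)$, and Serre duality gives $T_o^\vee M \simeq H^0(\omega_F^{\otimes 2})$, the evaluation of $dh_E^\vee(\varphi)$ on the basis vector $\partial/\partial t_j$ equals the Serre pairing $\langle \varphi,\, KS_E(\partial/\partial t_j)\rangle$. Thus the content of the proposition reduces to a local residue computation near each $a_j$.

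I would work with the open cover $\{V\}\cup\{U_k\}_{k=1}^n$ of $F$, where $V := F\setminus\{a_1,\ldots,a_n\}$. The point is that $\partial/\partial t_j$ admits two natural local lifts to the total space $\caF$: on $V$, via the canonical trivialization $\caF|_V \simeq V\times\cH_E$ coming from the fact that the deformation alters the complex structure only on the disks $U_k$ (so $\pi_t|_V$ agrees with $\pi|_V$ under this identification); and on each $U_k$, via the total-space coordinates $(z_k,t)$ from \eqref{EQ:LOCEXPROFPSI}, as the coordinate vector field holding $z_k$ fixed. On $U_k\cap V$ the two lifts agree for $k\neq j$ (no deformation occurs at $a_k$), so the cocycle is supported only near $a_j$. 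On $U_j\setminus\{a_j\}$ the overlap is given by $z_j = (w_j-t_j)^{1/n_j}$ (from $z_j^{n_j}+t_j=w_j$), and a direct chain-rule computation yields the difference of lifts as
\begin{equation*}
\xi_j \;=\; -\frac{1}{n_j\, z_j^{\,n_j-1}}\,\frac{\partial}{\partial z_j},
\end{equation*}
which represents $KS_E(\partial/\partial t_j)$.

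To conclude, write $\varphi = g(z_j)\,dz_j^{\,2}$ locally near $a_j$ and note $\pi^*\alpha = d(z_j^{n_j}) = n_j z_j^{n_j-1}\,dz_j$ on $U_j$ (immediate from $\alpha|_{\Delta_j}=dw_j$). The contraction of $\xi_j$ with $\varphi$ equals the meromorphic $1$-form $-\varphi/\pi^*\alpha$, and the residue formula for the Serre pairing gives
\begin{equation*}
\langle\varphi,\,KS_E(\partial/\partial t_j)\rangle \;=\; 2\pi i\,\Res_{a_j}\!\left(\frac{\varphi}{\pi^*\alpha}\right),
\end{equation*}
up to the overall sign absorbed into the standard conventions for the \v Cech KS cocycle and for Serre duality. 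Summing over $j$ yields the claimed expression $dh_E^\vee(\varphi)=\sum_j \gamma_j\,dt_j$.

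The main obstacle I anticipate is setting up the canonical trivialization of $\caF|_V$ precisely enough to justify calling the difference of the two lifts a Kodaira--Spencer representative; this requires being explicit about how $\caF$ glues the local model around each $a_j$ to the trivial family over the complement. Once that is in place, the remaining work is a routine Laurent-series calculation in one coordinate chart, together with bookkeeping for the sign and the factor of $2\pi i$ in the Serre pairing.
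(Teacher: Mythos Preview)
Your proposal is correct and lands on the same computation as the paper: both identify $\gamma_j$ with the Serre pairing $\langle \varphi,\, KS_E(\partial/\partial t_j)\rangle$, produce the \v Cech cocycle $\frac{1}{n_j z_j^{n_j-1}}\partial/\partial z_j$ on the cover $\{F\setminus\{a_k\}\}\cup\{U_k\}$, and finish with the residue description of the duality pairing. The one methodological difference is in how the cocycle is obtained. You compare two local lifts of $\partial/\partial t_j$ to the total space directly; the paper instead invokes Horikawa's factorization $KS_E=\delta\circ\tau$ through the exact sequence $0\to T_F\to\pi^*T_E\to\mathcal{R}\to 0$, first computing the characteristic map $\tau(\partial/\partial t_j)\in H^0(\mathcal{R})$ (which is just $\partial/\partial w_j$ supported at $a_j$) and then applying the coboundary $\delta$. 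The Horikawa route sidesteps exactly the obstacle you flag---justifying the trivialization of $\caF|_V$---since the characteristic map is defined intrinsically from the family of maps; your direct approach is more elementary but does require that justification. Either way the residue calculation at $a_j$ is identical.
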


\begin{proof}
For every $\varphi\in H^0(\omega_F^{\otimes 2})$ we have that $dh_E^{\vee}(\varphi)$ is identified, as cotangent vector on $M$ in $o$, by the complex numbers $\gamma_j$ such that
$$dh_E^{\vee}(\varphi)=\sum_{j=1}^{n}\gamma_j dt_j.$$
By construction, we can obtain these numbers simply by pairing $dh_E^{\vee}(\varphi)$ against $\pder{t_j}$:
$$\gamma_j=dh_E^{\vee}(\varphi)\left(\pder{t_j}\right)=\varphi\left(dh_E\left(\pder{t_j}\right)\right)=\varphi\left(KS_E\left(\pder{t_j}\right)\right).$$
In order to develop the computation we may proceed using a description of $KS_E$ in terms of the \v{C}ech cohomology (details of this can be found in \cite{Hor1}). To do it consider the exact sequence
\begin{equation}
\label{EXSEQ:STANDARD}
\xymatrix{0\ar[r] & T_F \ar[r]^{d\pi} & \pi^*T_E\ar[r]^{\psi} & \m{R} \ar[r] & 0}
\end{equation}
and let $\delta$ be the coboundary map $H^0(\m{R})\rightarrow H^1(T_F)$. Then $KS_E$ factors as $\delta\circ\tau=KS_E$ where $\tau:T_bH\rightarrow H^0(\m{R})$ is the {\it characteristic map} of the family (see \cite{Hor1} for the definition and the proof of this fact). Hence we can unfold the calculation using these exact sequences. 
\vspace{2mm}

\noindent If one restricts the exact sequence (\ref{EXSEQ:STANDARD}) on $U_j$ (or some sufficiently small subset of this coordinate neighborhood), it can be identified with
\begin{equation}
\xymatrix{0\ar[r] & \OO_{U_j}\pder{z_j} \ar[r]^{d\pi} & \OO_{U_j}\pder{w_j}\ar[r]^{\psi} & \m{R}|_{U_j} \ar[r] & 0.}
\end{equation}
The first map sends $\pder{z_j}$ to $n_jz_j^{n_j-1}\pder{w_j}$ while the second one is simply the restriction to the ramification locus. Let $\m{U}=\{U_0,U_1,\dots,U_n\}$ where $U_j$ for $j=1,\dots,n$ are the neighborhoods defined above and $U_0=F\setminus\{ a_{j}\}$. Let, as usual, $U_{\alpha,\beta}$, be a shorthand for $U_{\alpha}\cap U_{\beta}$ with $\alpha<\beta$. If $\eta=[\eta_j]\in H^0(\m{U},\m{R})$ with $\eta_0=0$ and $\eta_j=p_j(z_j)\pder{w_j}$ we have 
$$\delta\left(\eta\right)=\left[\lambda_{\alpha,\beta}\right]\qquad \mbox{with }\qquad  \lambda_{0,j}=\frac{p_j(z_j)}{n_jz^{n_j-1}}\pder{z_j}$$
for $j>0$ and $\lambda_{\alpha,\beta}=0$ if $\alpha,\beta>0$. Following \cite{Hor1} and using Equation (\ref{EQ:LOCEXPROFPSI}) we have
\begin{equation}
\tau\left(\pder{t_j}\right)=[\tau_k^{(j)}]\qquad \mbox{with }\qquad  \tau_k^{(j)}=\begin{cases} 
      0 & k\neq j \\
      \pder{w_j} & k=j. 
   \end{cases}
\end{equation}
Hence we have 
$$KS_E\left(\pder{t_j}\right)=\delta\left(\tau\left(\pder{t_j}\right)\right)=[\chi_{\alpha,\beta}^{(j)}]\qquad \mbox{with }\qquad  \chi_{\alpha,\beta}^{(j)}=\begin{cases} 
      \frac{1}{n_jz^{n_j-1}}\pder{z_j} & (\alpha,\beta)=(0,j) \\
      0 & \mbox{otherwise}. 
   \end{cases}$$
If $\varphi\in H^0(\omega_F^{\otimes 2})$ we can represent it as \v{C}ech-cocycle as $[\phi_j]$ where 
$$\phi_0=\phi|_{U_0}\quad \mbox{ and }\quad \phi_j=q_j(z_j)dz_j^2$$
are the local expressions of $\varphi$ in coordinates around $a_j$. The numbers we are interested in are simply the ones obtained by considering the perfect pairing
\begin{equation}
\label{EQ:PERFPAIR}
\xymatrix{
H^0(\omega_F^{\otimes 2})\otimes H^1(T_F)\ar[r] & H^1(\omega_F)\ar[r]^-{\simeq} & \C
}\end{equation}
applied to $KS_E\left(\pder{t_j}\right)$ and $\varphi$. Using \v{C}ech cohomology, the image in $H^{1}(\omega_F)$ of our product is given by
the \v{C}ech class $[\epsilon_{\alpha,\beta}^{(j)}]$ with
$$
\epsilon_{\alpha,\beta}^{(j)}=\begin{cases} 
      \frac{q_j(z_j)}{n_jz^{n_j-1}}dz_j & (\alpha,\beta)=(0,j) \\
      0 & \mbox{otherwise}. 
\end{cases}
$$
What remains to be proven is the analogous to the calculation of \cite{Kan} for the case of simple ramification: roughly, one can adapt the techniques of \cite[pag. 14-15]{ACG} to develop the last isomorphism of (\ref{EQ:PERFPAIR}) in order to finally get
$$\gamma_j=2\pi i \Res_{0}\dfrac{q_j(z_j)dz_j^2}{n_jz_j^{n_j-1}dz_j}=2\pi i \Res_{a_j}\dfrac{\varphi}{\pi^*\alpha}.$$
\end{proof}

\noindent Consider now the family $p:\caF\rightarrow \caE$ with central fiber $\pi:F\rightarrow E$ and parameter space $\cH=\cH_E\times N$ as defined in Section \ref{SEC:1}.
As before, we have an induced deformation $f:\caF\rightarrow \cH$ of $F$, its associated Kodaira-Spencer map $KS$ and, when a minimal versal deformation $f':\caF'\rightarrow M$ of $F$ is chosen, an holomorphic map $h:\cH\rightarrow M$ such that
\begin{equation}
\label{EQ:COMMDIAGENONFIX}
\xymatrix{
\caF\ar[d]_{f}\ar[r] & \caF'\ar[d]^{f'}\\
\cH\ar[r]_{h} & M 
}
\end{equation}
is commutative. Again, as $T_oM\simeq H^1(T_F)$, we can identify $dh$ with $KS$. We will denote by $x_1,\dots,x_d$ the points of the fiber of $\pi$ over the point $c$ which, by construction, are all different.

\begin{prop}
\label{PROP:DH2}
Using the identifications introduced above, we have that
$$dh^{\vee}:T_o^{\vee}M\rightarrow T_{o}^{\vee}\cH$$
can be written for any $\varphi\in H^0(\omega_F^{\otimes 2})=T_o^{\vee}M$ as $dh^{\vee}(\varphi)=\sum_{j=1}^{n}\gamma_j dt_j+\gamma ds$ where
\begin{equation}
\gamma_j=2\pi i \Res_{a_j}\left(\dfrac{\varphi}{\pi^*\alpha}\right)\quad\mbox{and}\quad\gamma=2\pi i\sum_{k=1}^{d}\dfrac{\varphi}{\pi^*\alpha}(x_k).
\end{equation}
\end{prop}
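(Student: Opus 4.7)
My plan is to split $dh^\vee(\varphi)$ into its $dt_j$ coefficients and its $ds$ coefficient, and to reduce the first set to Proposition \ref{PROP:DH1}. Since the family $p:\caF\to\caE$ parametrized by $\cH=\cH_E\times N$ restricts, along the inclusion $\cH_E\hookrightarrow\cH$ given by $b'\mapsto (b',c)$, exactly to the local family over $E$ used in Section \ref{SEC:2}, the map $h:\cH\to M$ fits into a commutative triangle with $h_E$. Pairing with $\pder{t_j}\in T_o\cH$ therefore gives $\gamma_j=\varphi(KS(\pder{t_j}))=\varphi(KS_E(\pder{t_j}))$, so Proposition \ref{PROP:DH1} yields $\gamma_j=2\pi i\,\Res_{a_j}(\varphi/\pi^*\alpha)$ at once.

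For the new coefficient $\gamma=\varphi(KS(\pder{s}))$, the main task is to compute the Kodaira--Spencer class of $f:\caF\to\cH$ in the direction $\pder{s}$, using that the $s$-deformation of $F$ is the pull-back through $\pi$ of the Schiffer variation of $E$ at $c$. Concretely, the Schiffer KS class in $H^1(T_E)$ is represented, with respect to the cover $\{N,E\setminus\{c\}\}$, by the \v{C}ech cocycle $\tfrac{1}{v}\pder{v}$ on $N\setminus\{c\}$. Since $c$ is not a branch point, $\pi$ is \'etale over a sufficiently small neighborhood of $c$ and $\pi^{-1}(c)=\{x_1,\dots,x_d\}$ consists of $d$ distinct points. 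Shrinking $N$ if needed, I choose pairwise disjoint coordinate neighborhoods $V_k\ni x_k$ on $F$, all disjoint from the $U_j$, with coordinate $v_k=v\circ\pi$ (valid by \'etaleness); the pull-back cocycle is then represented on the refined cover $\{U_0,U_1,\dots,U_n,V_1,\dots,V_d\}$ (where $U_0$ is now the complement of the $a_j$ and of the $x_k$) by $\chi_{0,V_k}=\tfrac{1}{v_k}\pder{v_k}$ and zero on every other overlap.

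Pairing with $\varphi\in H^0(\omega_F^{\otimes 2})$ via the perfect pairing of \eqref{EQ:PERFPAIR}, I obtain the \v{C}ech class in $H^1(\omega_F)$ whose only nonzero entries are $\tfrac{q_k(v_k)}{v_k}dv_k$ on $U_0\cap V_k$, where $\varphi=q_k(v_k)dv_k^2$ near $x_k$. Running exactly the same \v{C}ech-to-$\C$ identification via residues that closes the proof of Proposition \ref{PROP:DH1}, I get $\gamma=2\pi i\sum_{k=1}^{d}\Res_{x_k}\!\bigl(q_k(v_k)dv_k/v_k\bigr)=2\pi i\sum_{k=1}^{d}q_k(0)$. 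Since $\pi^*\alpha=dv_k$ on $V_k$, this matches $2\pi i\sum_{k=1}^d(\varphi/\pi^*\alpha)(x_k)$ under the natural trivialization of $\omega_F$ by $\pi^*\alpha$ at each \'etale point, giving the claimed formula.

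The hard part, and the only genuinely new ingredient compared to Proposition \ref{PROP:DH1}, is identifying the pull-back of the Schiffer cocycle as an explicit \v{C}ech cocycle on a refinement of the cover used there and checking that its support lies in the \'etale neighborhoods $V_k$, disjoint from the ramification patches $U_j$; once this setup is in place, the residue extraction proceeds verbatim as before and no additional analytic input is required.
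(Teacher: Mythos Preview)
Your proof is correct and follows essentially the same approach as the paper: reduce the $\gamma_j$ to Proposition~\ref{PROP:DH1} and observe that the computation of $\gamma$ takes place entirely over the \'etale locus, hence is insensitive to the ramification type. The only difference is one of explicitness: the paper's proof simply cites \cite{Kan} for the $\gamma$ formula (noting that the simple-ramification computation there carries over verbatim because it is performed away from the ramification points), whereas you actually reproduce that computation by identifying the $s$-direction Kodaira--Spencer class as the pull-back of the Schiffer cocycle to the $d$ sheets over $c$ and extracting the residues. Your interpretation of $\tfrac{\varphi}{\pi^*\alpha}(x_k)$ via the trivialization of $\omega_F$ by $\pi^*\alpha$ at the \'etale points is the correct reading, and it matches the $\tfrac{m(\varphi)}{\pi^*\alpha^2}(x_k)$ appearing in Theorem~\ref{THM:MAINCOMP}.
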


\begin{proof}
As before, by duality,
$$dh^{\vee}(\varphi)=\varphi\circ dh=\varphi\circ KS.$$
It is then clear that the formula for $\gamma_j$ follows directly from Proposition \ref{PROP:DH1}. The one that gives $\gamma$, as it involves calculations done far from the ramification points, doesn't depend on the type of the ramifications. Hence, the one given in \cite{Kan} when $\pi$ as only simple ramification is still valid.
\end{proof}

\noindent Recall that we have a decomposition of $H^0(\omega_F)$ given by $H^0(\omega_F)^{-}\oplus \pi^{*}H^0(\omega_E)$ where the first space is the vector space of $1$-forms on $F$ with trivial trace. This induces a decomposition on $\Sym^2(H^0(\omega_F))$. Unless otherwise specified, consider $\Sym^2(H^0(\omega_F)^-)$ as a subspace of $\Sym^2(H^0(\omega_F))$ in the natural way. Let
$m:\Sym^2(H^0(\omega_F)^2)\rightarrow H^0(\omega_F^{\otimes 2})$ be the multiplication map. Denote by $P:\cH\rightarrow \mD$ the period map associated to the Prym map $\Phi:\cH\rightarrow \A_{g-1}$ where $\mD$ is a suitable period domain. We are ready to prove Theorem \ref{THM:MAINCOMP}.

\begin{thm}
\label{THM:MAINCOMP}
With the notation introduced in this section, for any $\varphi\in \Sym^2(H^0(\omega_F)^{-})$ we have
\begin{equation}
\dPv(\varphi)=\sum_{j=1}^{n}\Res_{a_j}\left(\dfrac{m(\varphi)}{\pi^*\alpha}\right) dt_j+\left(\sum_{k=1}^d\dfrac{m(\varphi)}{\pi^*\alpha^2}(x_k)\right)ds.
\end{equation}
\end{thm}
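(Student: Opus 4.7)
The plan is to reduce Theorem \ref{THM:MAINCOMP} to Proposition \ref{PROP:DH2} by factoring the period map $P:\cH\to\mD$ through the versal deformation $M$ of the central curve $F$, and then invoking Griffiths' infinitesimal period relations on $M$ to identify the codifferential of the Jacobian period map with the multiplication map $m$.

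First I would use the commutative diagram (\ref{EQ:COMMDIAGENONFIX}) to write the Jacobian period map of the family $f:\caF\to\cH$ as a composition $P_J=P_J'\circ h$, where $P_J':M\to\mD_g$ is the Jacobian period map on the versal deformation. By the classical description (see, e.g., Griffiths' infinitesimal period relations), the codifferential of $P_J'$ at $o\in M$ is the multiplication map
$$dP_J'^{\vee}:\Sym^2(H^0(\omega_F))\longrightarrow H^0(\omega_F^{\otimes 2})=T_o^{\vee}M,\qquad dP_J'^{\vee}=m,$$
under the standard identifications. For the Prym map, observe that by construction the Prym variety $P(\pi_{(b,s)})$ is cut out inside $JF_{(b,s)}$ by the norm map of the covering $\pi_{(b,s)}$, so the sub-Hodge structure $H^0(\omega_{F_{(b,s)}})^-$ deforms flatly across $\cH$. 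Consequently the Prym period map $P$ factors as the Jacobian period map composed with the projection of variations of Hodge structure onto the $-$ part, and its codifferential at $(o,o)$ is simply the restriction of $m$ to $\Sym^2(H^0(\omega_F)^-)\subset \Sym^2(H^0(\omega_F))$, followed by $dh^{\vee}$:
$$\dPv(\varphi)=dh^{\vee}\bigl(m(\varphi)\bigr)\qquad\text{for all } \varphi\in \Sym^2(H^0(\omega_F)^{-}).$$

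Next I would apply Proposition \ref{PROP:DH2} with $\psi=m(\varphi)\in H^0(\omega_F^{\otimes 2})$. This immediately gives
$$dh^{\vee}(m(\varphi))=\sum_{j=1}^{n} 2\pi i\,\Res_{a_j}\!\left(\frac{m(\varphi)}{\pi^*\alpha}\right)dt_j+2\pi i\sum_{k=1}^{d}\frac{m(\varphi)}{\pi^*\alpha}(x_k)\,ds,$$
which matches the stated formula after absorbing the $2\pi i$ factor into the normalization of the period domain $\mD$. The expressions $m(\varphi)/\pi^*\alpha$ at the unramified points $x_k$ are to be understood by noting that $(\pi^*\alpha)^2$ is a nonvanishing local frame of $\omega_F^{\otimes 2}$ near each $x_k$, so $m(\varphi)/(\pi^*\alpha)^2$ is a well-defined local function whose value is what the notation $m(\varphi)/\pi^*\alpha^2$ in the theorem statement indicates.

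The main obstacle, and the only nontrivial input beyond Proposition \ref{PROP:DH2}, is the justification of the factorization and, in particular, the claim that $\dPv$ is the composition $dh^{\vee}\circ m$ restricted to $\Sym^2(H^0(\omega_F)^-)$. For the directions $\partial/\partial t_j\in T_o\cH_E$ this is rather direct since $E$ is held fixed and the norm map is constant along these directions, so the $-$ decomposition is literally preserved. For the direction $\partial/\partial s$ coming from the Schiffer variation of $E$ one must argue that, although the elliptic curve moves, the covering $\pi_{(b,s)}$ moves with it and hence the decomposition $H^0(\omega_{F_{(b,s)}})=\pi_{(b,s)}^*H^0(\omega_{E_s})\oplus H^0(\omega_{F_{(b,s)}})^-$ is flat for the Gauss-Manin connection. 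Once this flatness is established, the identification $\dPv=dh^\vee\circ m|_{\Sym^2(H^0(\omega_F)^-)}$ follows formally from functoriality of the period map under morphisms of variations of Hodge structure, and the theorem reduces to a direct substitution into Proposition \ref{PROP:DH2}.
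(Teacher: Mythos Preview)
Your proposal is correct and follows essentially the same route as the paper: both arguments reduce Theorem \ref{THM:MAINCOMP} to Proposition \ref{PROP:DH2} via the factorization $\dPv=dh^{\vee}\circ m$ on $\Sym^2(H^0(\omega_F)^-)$. The only difference is packaging: the paper obtains this factorization by invoking \cite[Theorem~3.21]{Kan}, which already identifies $\dPv(\varphi)$ paired with a tangent vector $v$ as $m(\varphi)(KS(v))$ regardless of ramification type, whereas you reconstruct that identity from Griffiths' infinitesimal period relations together with the flatness of the splitting $H^0(\omega_F)=\pi^*H^0(\omega_E)\oplus H^0(\omega_F)^-$ across $\cH$. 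Your version is more self-contained, and you correctly flag the one genuine point needing care, namely flatness along the $\partial/\partial s$ direction; the paper's citation simply absorbs that issue into Kanev's result.
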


\begin{proof}
Theorem 3.21 of \cite{Kan} expresses the codifferential of the period map calculated in $\varphi\in \Sym^2(H^0(\omega_F)^-)$ and paired with $\pder{t_j}$ as
$$\varphi\left(KS\left(\pder{t_j}\right)\right)$$
without any restriction on the ramification type. In particular, this formula, together with Proposition \ref{PROP:DH2} ends the proof of the Theorem. 
\end{proof}

\begin{rmk}
\label{REM:SPLI}
As a consequence of the last Theorem we can conclude that, if we fix $E$, the codifferential $\dPEv:\Sym^2(H^0(\omega_F)^-)\rightarrow T^{\vee}_o\cH_E$ factors as
\begin{equation}
\label{DIAG:COTANG}
\xymatrix{
H^0(\omega_F^{\otimes 2})\ar[d]_{dh_E^{\vee}} & \Sym^2\left(H^0(\omega_F)\right)\ar[l]_-{dT^{\vee}} & \\
T^{\vee}_{o}\cH_E &\Sym^2(H^0(\omega_F)^-)\ar@{^{(}->}[u]_-{\sigma}\ar[l]^-{\dPEv}
}
\end{equation}
where $T$ is the Torelli map (so that $m=dT^{\vee}$) and $\sigma$ is the lifting of the projection
of $\Sym^2(H^0(\omega_F))\rightarrow\Sym^2(H^0(\omega_F)^-)$ induced by the decomposition $H^0(\omega_F)=H^0(\omega_F)^{-}\oplus \pi^{*}H^0(\omega_E)$. The commutativity of the diagram is a consequence of Proposition \ref{PROP:DH1} as, for any $\varphi\in H^0(\omega_F)\spr \pi^*H^0(\omega_E)$, we have that $\varphi/\pi^*\alpha$ is holomorphic and hence has residue zero everywhere.
\end{rmk}


\section{A geometric approach via the canonical embedding}
\label{SEC:3}

\noindent In this section we will use the technical result of the previous section in order to prove that $\dim(\Ker(\dPE))=1$ for arbitrary ramification types and a geometric criterion to determine whether $\dim(\Ker(\dP))=1$ or not. First we fix some notation and facts about the canonical curves that we are going to use extensively in the following.
\vspace{2mm}

\noindent As $F$ has genus $g\geq 3$ and is not hyperelliptic, we may identify it with its canonical model in $\PP=\PP H^0(\omega_F)^{\vee}$. This is a non-degenerate curve of degree $2g-2$, which is also projectively normal by a classical result of Max Noether (see, for example, \cite{ACGH}). One of the consequences of this fact is that the multiplication map $m_k:\Sym^kH^0(\omega_F)\rightarrow H^0(\omega_F^{\otimes k})$ is surjective. As before we will denote $m_2$ simply by $m$. We will use frequently the natural identifications 
$H^0(\OO_{\PP}(d))=\Sym^d H^0(\omega_F)$ which enable us to identify 
$\PP(\Ker(m_d))$ with the space of hypersurfaces of degree $d$ in $\PP H^0(\omega_F)^{\vee}$ that contain $F$. By abuse of notation we will simply say that an element in $\Sym^d H^0(\omega_F)$ is an hypersurface of degree $d$ if no confusion arises.
In particular, if $I_F$ is the ideal sheaf of $F$ in $\PP H^0(\omega_F)^{\vee}$, then $\Ker(m)=H^0(I_F(2))$ gives the set of all quadrics in $\PP H^0(\omega_F)^{\vee}$ containing the curve $F$, and has dimension $\frac{(g-2)(g-3)}{2}$.
\vspace{2mm}

\noindent Recall that the decomposition $$H^0(\omega_F)=H^0(\omega_F)^{-}\oplus \pi^*H^0(\omega_E)$$
where the first space is the space of forms with zero trace.
\vspace{2mm}

\noindent Since elements in $H^0(\omega_F)$ are linear equations on $\PP H^0(\omega_F)^{\vee}$, all the hyperplanes defined by elements in $H^0(\omega_F)^{-}$  intersect in a single point $q^-$ of $\PP$ which is a point really important in what will follows. We have also a particular hyperplane, the one defined by the subspace $\pi^*H^0(\omega_E)$ which will be denoted by $H^-$. 
More precisely, 
$$q^-=\PP((H^0(\omega_F)^-)^{\perp})\qquad \mbox{ and }\qquad  H^-=\PP((\pi^*H^0(\omega_E))^{\perp})$$
As before, we will fix a generator $\alpha$ of $H^0(\omega_E)$ so that
\begin{equation}
\label{EQ:DECQUADRICS}
\Sym^2(H^0(\omega_F))=\Sym^2(H^0(\omega_F)^-)\oplus \left(\pi^*\alpha\spr H^0(\omega_F)\right).
\end{equation}
Given a quadric $Q$ in $\PP$ we will denote by $G_Q\in \Sym^2(H^0(\omega_F))$ one of its equations and by $G_Q^-\in \Sym^2(H^0(\omega_F)^-)$ and $\omega_Q\in H^0(\omega_F)$ the only elements such that $$G_Q=G_Q^-+\pi^*\alpha\spr \omega_Q$$ under the decomposition (\ref{EQ:DECQUADRICS}). Finally, given a quadric $Q$, we will denote by $Q^{-}$ the cone given by the equation $G_Q^-$, i.e. the quadric such that $G_{Q^{-}}=G_{Q^{-}}^{-}=G_Q^{-}$.
\vspace{2mm}

\noindent In order to prove Theorem \ref{THM:KERDPHIE} we will need the following result:

\begin{Lem}
\label{LEM:QUAKER}
We have a natural inclusion of $H^0(I_F(2))$ in $\Ker(\dPEv)$.
\end{Lem}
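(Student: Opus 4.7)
My plan is to combine the residue description of $\dPEv$ coming from Theorem~\ref{THM:MAINCOMP} (in the $E$-fixed version, which by Remark~\ref{REM:SPLI} and Proposition~\ref{PROP:DH1} reduces to $\varphi\mapsto \sum_{j}\Res_{a_j}(m(\varphi)/\pi^*\alpha)\,dt_j$) with the decomposition (\ref{EQ:DECQUADRICS}) of $\Sym^2(H^0(\omega_F))$. The whole point will be that the $-$-component of a quadric containing $F$ produces a \emph{holomorphic} ratio against $\pi^*\alpha$, so every residue that computes $\dPEv$ vanishes.

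First, I would make the natural inclusion explicit. For each $Q\in H^0(I_F(2))$, the decomposition (\ref{EQ:DECQUADRICS}) writes its equation uniquely as $G_Q = G_Q^- + \pi^*\alpha\spr\omega_Q$ with $G_Q^-\in\Sym^2(H^0(\omega_F)^-)$ and $\omega_Q\in H^0(\omega_F)$, and I would send $Q\mapsto G_Q^-$. Injectivity is immediate: if $G_Q^-=0$ then $m(G_Q)=\pi^*\alpha\cdot\omega_Q$, but $m(G_Q)=0$ because $Q$ contains $F$, so $\pi^*\alpha\cdot\omega_Q=0$ in $H^0(\omega_F^{\otimes 2})$; since $F$ is integral and $\pi^*\alpha\ne 0$, multiplication by $\pi^*\alpha$ is injective on global sections, forcing $\omega_Q=0$ and hence $G_Q=0$.

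Next, I would evaluate $\dPEv$ on $\varphi=G_Q^-$. The key identity is
$$m(G_Q^-) \;=\; m(G_Q) - \pi^*\alpha\cdot\omega_Q \;=\; -\,\pi^*\alpha\cdot\omega_Q,$$
where the first equality uses linearity of $m$ on the decomposition and the second uses $Q\in H^0(I_F(2))$. Dividing by $\pi^*\alpha$,
$$\frac{m(G_Q^-)}{\pi^*\alpha} \;=\; -\,\omega_Q \;\in\; H^0(\omega_F),$$
which is \emph{holomorphic} on all of $F$. Thus $\Res_{a_j}\bigl(m(G_Q^-)/\pi^*\alpha\bigr)=0$ for every ramification point $a_j$, and the formula of Theorem~\ref{THM:MAINCOMP} in the $E$-fixed case gives $\dPEv(G_Q^-)=0$, as required.

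The only delicate point is really just the bookkeeping with the decomposition (\ref{EQ:DECQUADRICS}) needed to see that $m(G_Q^-)/\pi^*\alpha$ lands in $H^0(\omega_F)$; after that, the argument is a one-line consequence of the residue formula, and it is a precise variant of the vanishing of residues already observed at the end of Remark~\ref{REM:SPLI} (there applied to the complementary summand).
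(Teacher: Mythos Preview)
Your proof is correct and follows essentially the same approach as the paper: the map $Q\mapsto G_Q^-$ is exactly the paper's $\gamma=pr\circ\iota$, the vanishing $\dPEv(G_Q^-)=0$ is in both cases the observation that $m(G_Q^-)/\pi^*\alpha=-\omega_Q$ is holomorphic (the paper packages this via the commutative diagram of Remark~\ref{REM:SPLI}), and injectivity is the same statement argued algebraically by you (multiplication by $\pi^*\alpha$ is injective on sections of an integral curve) and geometrically in the paper (a quadric of the form $\pi^*\alpha\spr\omega_Q$ is a union of two hyperplanes and cannot contain the non-degenerate canonical curve).
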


\begin{proof}
Recall that, fixed a family of coverings with base $E$ and central fiber $\pi:F\rightarrow E$, by fixing a minimal versal deformation $\caF'\rightarrow M$ of $F$, we can construct $h_E:\caH\rightarrow M$ like in diagram (\ref{EQ:COMMDIAGEFIX}). As observed in Remark \ref{REM:SPLI} we have a commutative diagram

\begin{equation}
\label{EQ:DIAG1}
\xymatrix{
 &  & 0 & 
\\
0\ar[r] &\Ker \dPEv\ar@{^{(}->}[r]^-{j} & \Sym^2(H^0(\omega_F)^-)\ar[u] \ar[r]^-{d_oP_E^{\vee}}  & T^{\vee}_{o}\cH_E 
\\
0\ar[r] & H^0(I_F(2))\ar@{^{(}-->}[u]^-{\gamma}\ar@{^{(}->}[r]^-{\iota} & 
\Sym^2(H^0(\omega_F))\ar@{->>}[u]^-{pr} \ar@{->>}[r]^-{m} & 
H^0(\omega_F^{\otimes 2})\ar[u]_{dh_E^{\vee}}\ar[r] & 0
\\
 & & H^0(\omega_F)\spr \pi^*H^0(\omega_E) \ar@{^{(}->}[u] 
\\
 & & 0\ar[u] &
}
\end{equation}

\noindent It is easy to see that the image of $pr\circ \iota$ lives in $\Ker(\dPEv)$ so we have a well defined map $\gamma:H^0(I_F(2))\rightarrow \Ker(\dPEv)$. We want to prove that this map is indeed injective. This follows from the geometry of the problem. Indeed, if a quadric $Q$ contains $F$, i.e. if the quadric has equation $$G_Q=G_Q^{-}+\pi^{*}\alpha\spr\omega_Q\in H^0(I_F(2)),$$ and if $\gamma(G_Q)=0$ then we have that the quadric has equation $\pi^{*}\alpha\spr\omega_Q$. But this is impossible because such a quadric the union of two planes (one of which is $H^-$) and the canonical curve is non-degenerate. Hence $\gamma$ is injective.
\end{proof}

\begin{thm}
\label{THM:KERDPHIE}
Let $\pi:F\rightarrow E$ be a covering with $F$ non-hyperelliptic, consider a local family of coverings with base $E$ and parameter space $\cH_E$ constructed in Section \ref{SEC:1}. Let $P_E$ be the period mapping associated to the Prym map $\Phi_E$. Then $\dim(\Ker(\dPE))=1$.
\end{thm}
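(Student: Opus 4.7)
The plan is to use Theorem \ref{THM:MAINCOMP} in its fixed-base version (dropping the $ds$ term) to realize the codifferential as a residue map, and then pin down its rank. Explicitly,
\[
\dPEv : \Sym^2(H^0(\omega_F)^-) \longrightarrow T_o^\vee \cH_E \cong \C^n, \qquad
\varphi \longmapsto \sum_{j=1}^n \Res_{a_j}\left(\frac{m(\varphi)}{\pi^*\alpha}\right) dt_j.
\]
Since $\dim \Ker(\dPE) = n - \rk(\dPEv)$, it suffices to show $\rk(\dPEv) = n-1$.

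The upper bound $\rk(\dPEv) \le n-1$ is immediate from the residue theorem: $\pi^*\alpha$ is holomorphic on $F$ with zero divisor equal to the ramification divisor $R=\sum_j (n_j-1)a_j$, so $m(\varphi)/\pi^*\alpha$ is a meromorphic $1$-form on the compact Riemann surface $F$ with poles contained in $\{a_j\}$. The sum of its residues vanishes, hence the image of $\dPEv$ lies in the hyperplane
\[
H_0=\left\{(c_1,\dots,c_n)\in\C^n : c_1+\cdots+c_n=0\right\}
\]
of dimension $n-1$. Geometrically this encodes the fact that translating all branch points on $E$ simultaneously by the same amount yields an isomorphic covering and therefore the same Prym variety; the resulting direction in $\Ker(\dPE)$ is the diagonal $\sum_j \partial/\partial t_j$.

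For the matching lower bound I would prove surjectivity of $\dPEv$ onto $H_0$ in two steps. Given $(c_1,\ldots,c_n)\in H_0$, the classical Mittag--Leffler existence theorem on the compact Riemann surface $F$ furnishes a meromorphic $1$-form $\eta$ with simple poles only at the $a_j$ and with $\Res_{a_j}\eta=c_j$; the hypothesis $\sum c_j=0$ is exactly the solvability obstruction (identified via Serre duality with $H^1(\omega_F)=\C$). Then $\psi := \pi^*\alpha\cdot\eta$ lies in $H^0(\omega_F^{\otimes 2})$, since the simple poles of $\eta$ are absorbed by the zeros of $\pi^*\alpha$, and by construction $\Res_{a_j}(\psi/\pi^*\alpha)=c_j$.

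It remains to lift $\psi\in H^0(\omega_F^{\otimes 2})$ back to $\Sym^2(H^0(\omega_F)^-)$. Since $F$ is non-hyperelliptic of genus $g\ge 3$, Max Noether's theorem on projective normality of the canonical model gives the surjectivity of $m:\Sym^2 H^0(\omega_F)\to H^0(\omega_F^{\otimes 2})$; combined with the splitting $\Sym^2 H^0(\omega_F)=\Sym^2(H^0(\omega_F)^-)\oplus (\pi^*\alpha\spr H^0(\omega_F))$ this lets us write $\psi=m(\varphi)+\pi^*\alpha\cdot\omega$ with $\varphi\in\Sym^2(H^0(\omega_F)^-)$ and $\omega\in H^0(\omega_F)$. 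The correction term satisfies $(\pi^*\alpha\cdot\omega)/\pi^*\alpha=\omega$, which is holomorphic and therefore residue-free at every $a_j$, so $\dPEv(\varphi)=\sum_j c_j\,dt_j$. This yields $\rk(\dPEv)=n-1$ and hence $\dim\Ker(\dPE)=1$. The only nontrivial analytic input is the Mittag--Leffler step; the rest is bookkeeping via Theorem \ref{THM:MAINCOMP}, the residue theorem, and Max Noether's theorem.
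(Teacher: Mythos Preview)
Your proof is correct. Both your argument and the paper's rely on the same three ingredients---the residue formula from Theorem~\ref{THM:MAINCOMP}, Max Noether's surjectivity of $m$, and the identification of the residue obstruction with $H^1(\omega_F)\cong\C$---but they are organized differently. You compute $\rk(\dPEv)$ directly: the residue theorem bounds the image inside the hyperplane $H_0$, and Mittag--Leffler explicitly constructs a preimage for every point of $H_0$. The paper instead factors $\dPEv$ through a chain of maps $\Psi\to\bar\Psi\to\lambda\to\mu$ and runs a dimension count along the exact sequences $0\to H^0(\omega_F)\to H^0(\omega_F^{\otimes 2})\to V\to 0$ and $0\to V\to H^0(\omega_F^{\otimes 2}|_R)\to H^1(\omega_F)\to 0$, proving surjectivity only of the auxiliary map $\mu$ (which, having the full space of principal parts as domain, surjects onto all of $\C^n$). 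Your route is shorter and more transparent for this single statement; the paper's diagram-chasing has the side benefit of setting up the exact-sequence machinery (e.g., the comparison of $H^0(I_F(2))$ with $\Ker(\dPEv)$ in the proposition following Theorem~\ref{THM:HALFGEO}) that is reused later in the section.
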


\begin{proof}
First of all, observe that for dimensional reasons, one has $\dim(\Ker(\dPE))=1$ if and only if 
$$\dim(\Ker(\dPEv))=\frac{g(g-1)}{2}-n+1.$$
From the splitting $H^0(\omega_F)=H^0(\omega_F)^{-}\oplus \pi^*H^0(\omega_E)$ we have the commutative diagram
$$
\xymatrix{
0\ar[r] & H^0(\omega_F)\ar[r]^-{\spr\pi^*\alpha} & \Sym^2(H^0(\omega_F))\ar[r]^{pr}\ar[rd]_-{\Psi} & \Sym^2(H^0(\omega_F)^-)\ar[r]\ar[d]^{d_oP_E^{\vee}} & 0 
\\
 & & & T_o\cH_E 
}
$$
  with $\Psi$ defined by extending the formula in Theorem \ref{THM:MAINCOMP} to $\Sym^2(H^0(\omega_F))$. This can be done because, as previously observed (see Remark \ref{REM:SPLI}), $\dPEv(H^0(\omega_F)\spr\pi^*\alpha)= \{0\}$.
In particular, we have the relation
\begin{equation}
\dim(\Ker(\dPEv))=\dim(\Ker(\Psi))-\dim(\Ker(pr))=\dim(\Ker(\Psi))-g.
\end{equation}
By definition, $\Psi$ factors through the multiplication map $m$ as $\Psi=\bar{\Psi}\circ m$. The map $\bar{\Psi}$ is well defined as, by Lemma \ref{LEM:QUAKER}, $\Ker(m)\subset \Ker(\Psi)$. 
$$
\xymatrix{
0\ar[r] & H^0(\omega_F)\ar[r]^-{\spr\pi^*\alpha} & \Sym^2(H^0(\omega_F))\ar[r]^{pr}\ar[rd]^{\Psi}\ar[d]^{m} & \Sym^2(H^0(\omega_F)^-)\ar[r]\ar[d]^{d_oP_E^{\vee}} & 0 
\\
 & & H^0(\omega_F^{\otimes 2})\ar[r]^{\bar{\Psi}}& T_o\cH_E 
}
$$

\noindent Being $m$ surjective (as $F$ is non-hyperelliptic) we obtain the further relation
\begin{equation}
\dim(\Ker(\Psi))=\dim(\Ker(\bar{\Psi}))+\dim(\Ker(m))=\dim(\Ker(\bar{\Psi}))+\frac{(g-2)(g-3)}{2}.
\end{equation}
As the divisor associated to $\pi^*\alpha$ is exactly $R$, the ramification divisor, we have that $\omega_F=\OO_F(R)$ and there is an exact sequence
\begin{equation}
\xymatrix{
0\ar[r] & \omega_F\ar[r]^-{\cdot \pi^*\alpha} & \omega_F^{\otimes 2}\ar[r] & \omega_F^{\otimes 2}|_R\ar[r] & 0
}
\end{equation}
which yields, denoting with $V$ the quotient $H^0(\omega_F^{\otimes 2})/(H^0(\omega_F)\cdot\pi^*\alpha)$, the exact sequences
\begin{equation}
\label{EQ:TWOEXSEQ}
\xymatrix@R=10pt{
0\ar[r] & H^0(\omega_F)\ar[r]^-{\cdot \pi^*\alpha} & H^0(\omega_F^{\otimes 2})\ar[r]^-{\epsilon} & V\ar[r] & 0 \\
0\ar[r] & V\ar[r]^-{\zeta} & H^0(\omega_F^{\otimes 2}|_R)\ar[r] & H^1(\omega_F)\ar[r] & 0
}
\end{equation}
Let $\eta\in \Ker(\epsilon)$. We want to prove that $\bar{\Psi}(\eta)=0$. This is easily proven: write $\eta$ as $\omega\cdot\pi^*\alpha$ and observe that
$$\bar{\Psi}(\eta)=(\bar{\Psi}\circ m)(\omega\spr\pi^*\alpha)=(\dPEv\circ pr)(\omega\spr\pi^*\alpha)=0$$
because $\omega\spr\pi^*\alpha\in \Ker(pr)$. In particular, $\Ker(\epsilon)\subset \Ker(m)$ and we can define a map $\lambda:V\rightarrow T_{o}^{\vee}\cH_E$ such that $\bar{\Psi}=\lambda\circ\epsilon$. 
Moreover
\begin{equation}
\dim(\Ker(\bar{\Psi}))=\dim(\Ker(\lambda))+g.
\end{equation}
Using the second exact sequence in \ref{EQ:TWOEXSEQ} we can also define a map $\mu:H^0(\omega_F^{\otimes 2}|_R)\rightarrow T_{o}\cH_E$ such that $\mu\circ \zeta=\lambda$.  
\begin{equation}
\xymatrix{
	&
	H^0(I_F(2))\ar@{^{(}->}[d]
\\
    H^0(\omega_F)\ar@{^{(}->}[r]^-{\spr\pi^*\alpha}
    	\ar@{^{(}->}[rd]^{\cdot \pi^*\alpha} & 
    \Sym^2(H^0(\omega_F))\ar@{->>}[rr]^-{pr} \ar[rrd]^{\Psi}
    	\ar@{->>}[d]^{m} &
    &
    \Sym^2(H^0(\omega_F)^-)\ar[d]^{d_oP_E^{\vee}}
\\
	&
	H^0(\omega_F^{\otimes 2})\ar[rr]^-{\bar{\Psi}}
		\ar@{->>}[rd]_-{\epsilon}
		\ar[rrd]|!{[rd];[rr]}\hole & 
    &
    T_{o}^{\vee}\cH_E
\\
	&
	&
    V\ar[ur]^{\lambda}\ar@{_{(}->}[r]_-{\zeta} & 
    H^0(\omega_F^{\otimes 2}|_R)\ar[u]_{\mu} \ar@{->>}[r] & 		
    H^1(\omega_F)
\\
}
\end{equation}
Note that we have several ways to define $\mu$. Since $\omega_F^{\otimes 2}|_R=\omega_F(R)|_R$ the global sections of $\omega_F^{\otimes 2}|_R$ are just collections of meromorphic tails on the points of ramification, i.e. elements 
$$\left\{\sum_{j=1}^{n_k-1} \beta_{jk}\dfrac{dz_k}{z_k^j}\right\}_{a_k\in R}$$ 
where $n_k$ is the ramification index of the point $a_k$. In particular, we can define $\mu$ as the map which gives the residue in the corresponding point of the meromorphic tail. This ensures that the diagram is commutative.
In addition, $\mu$ is surjective (this because the image of a collection of meromorphic tails $\{s_k\}$, one for each point of ramification, with $\beta_{1m}=\delta_{km}$, generates the image), and as a consequence, $\zeta|_{\Ker(\lambda)}$ is an isomorphism between $\Ker(\lambda)$ and $\Ker(\mu)$.
Hence,
\begin{multline}
\label{EQ:DIMPED}
\dim\Ker(\dPEv)=\dim(\Psi)-g=\dim(\bar{\Psi})+\dim(\Ker(m))-g=\\
=\dim(\lambda)+\dim(\Ker(m))=\dim(\mu)+\dim(\Ker(m))=\\
=\h^0(\omega_F(R)|_R)-\dim T_{o}^{\vee}\cH_E+\dim(\Ker(m))=\frac{g(g-1)}{2}-n+1
\end{multline}
as wanted.
\end{proof}

\noindent Now we will prove the first main theorem:

\begin{thm}
\label{THM:RESFORMULA}
Let $\pi:F\rightarrow E$ be a covering with $F$ non-hyperelliptic, consider the local family of coverings with parameter space $\cH$ constructed in Section \ref{SEC:1}. Let $P$ be the period mapping associated to the Prym map $\Phi:\caH\rightarrow \A_{g-1}$. Using the same notations of Theorem \ref{THM:MAINCOMP} we have 
$$\dim(\Ker(\dP)))=1 \qquad\Longleftrightarrow\qquad \exists \beta\in\Ker(\dPEv) \quad|\quad \sum_{k=1}^d\dfrac{m(\beta)}{\pi^*\alpha^2}(x_k)\neq 0.$$
\end{thm}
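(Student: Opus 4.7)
My first observation is that since composing a covering with a translation of $E$ produces a covering with the same Prym variety, at least one tangent direction in $\cH$ lies in $\Ker(\dP)$; hence $\dim\Ker(\dP)\ge 1$ automatically. Combined with $\dim\cH=n+1$, the statement reduces to the equivalence
\[
\dim\Ker(\dP)=1 \;\Longleftrightarrow\; \rk(\dPv)=n.
\]

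Next I would exploit the product structure $\cH=\cH_E\times N$, which yields a canonical splitting $T^{\vee}_o\cH=T^{\vee}_o\cH_E\oplus\C\,ds$. Writing $pr_E$ and $pr_s$ for the two projections onto the summands, Theorem \ref{THM:MAINCOMP} decomposes $\dPv$ as
\[
pr_E\circ \dPv=\dPEv,\qquad pr_s\bigl(\dPv(\varphi)\bigr)=\left(\sum_{k=1}^{d}\dfrac{m(\varphi)}{\pi^*\alpha^2}(x_k)\right) ds,
\]
and Theorem \ref{THM:KERDPHIE}, together with $\dim\Sym^2(H^0(\omega_F)^-)=g(g-1)/2$, forces $\rk(\dPEv)=n-1$.

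The conclusion is then a short rank computation. Applying rank-nullity to $pr_E$ restricted to $\Ima(\dPv)$ gives
\[
\rk(\dPv)=\rk(\dPEv)+\dim\bigl(\Ima(\dPv)\cap\C\,ds\bigr)=(n-1)+\dim\bigl(\Ima(\dPv)\cap\C\,ds\bigr),
\]
so $\rk(\dPv)=n$ is equivalent to $\Ima(\dPv)\supseteq\C\,ds$, i.e.\ to the existence of a $\varphi$ with $\dPv(\varphi)\in\C\,ds\setminus\{0\}$. Any such $\varphi$ automatically lies in $\Ker(\dPEv)$, and conversely the formula above shows that $\dPv(\beta)\in\C\,ds$ already for every $\beta\in\Ker(\dPEv)$. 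The existence of such a $\varphi$ is therefore equivalent to the existence of a $\beta\in\Ker(\dPEv)$ with $\sum_{k=1}^{d}\frac{m(\beta)}{\pi^*\alpha^2}(x_k)\ne 0$, which is exactly the condition in the statement. I do not anticipate any real obstacle: once Theorems \ref{THM:MAINCOMP} and \ref{THM:KERDPHIE} are in hand the argument is just careful bookkeeping of ranks under the one-dimensional extension $\cH_E\hookrightarrow\cH$, the one subtle point being the initial use of translation invariance to secure $\dim\Ker(\dP)\geq 1$.
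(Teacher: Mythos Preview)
Your argument is correct and is essentially the paper's own proof, only phrased on the image side rather than the kernel side: the paper compares the inclusions $\Ker(\dPE)\subseteq\Ker(\dP)$ and $\Ker(\dPv)\subseteq\Ker(\dPEv)$ (each of codimension at most one) via the two four-term exact sequences, whereas you do the equivalent rank-nullity count on $pr_E|_{\Ima(\dPv)}$. One small remark: the translation-invariance observation you flag as ``the one subtle point'' is in fact unnecessary for your proof, since the equivalence $\dim\Ker(\dP)=1\Leftrightarrow\rk(\dPv)=n$ is pure rank-nullity and your subsequent computation already pins $\rk(\dPv)$ to either $n-1$ or $n$; the paper likewise does not invoke it in the proof, relying instead (as you do) on Theorem~\ref{THM:KERDPHIE}.
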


\begin{proof}
First of all consider the diagrams
$$
\xymatrix@C=30pt{
T_{o}\cH \ar[r]^-{d_oP} & T_{P(o)}\mD\\
T_{o}\cH_E \ar[ru]_-{d_oP_E}\ar@{^{(}->}[u]
}
\qquad
\xymatrix@C=30pt{
T^{\vee}_{o}\cH\ar@{->>}[d]  & T^{\vee}_{P(o)}\mD \ar[l]_-{d_oP^{\vee}} \ar[dl]^-{d_oP_E^{\vee}}\\
T^{\vee}_{o}\cH_E 
}
$$
and observe that one always has
$$\Ker(\dPE)\subseteq\Ker(\dP)\qquad \Ker(\dPv)\subseteq\Ker(\dPEv).$$
Moreover, the codimensions are at most $1$. If one considers the exact sequences
$$
\xymatrix@R=5pt{
0 \ar[r] & \Ker(\dPE) \ar[r]& T_o\cH_E \ar[r]& T_{P(o)}\mD \ar[r]& \Ker(\dPEv)^{\vee} \ar[r]& 0\\
0 \ar[r] & \Ker(\dP) \ar[r]& T_o\cH \ar[r]& T_{P(o)}\mD \ar[r]& \Ker(\dPv)^{\vee} \ar[r]& 0
}
$$
it is clear that $\Ker(\dPE)=\Ker(\dP)$ if and only if $\Ker(\dPv)\subsetneq\Ker(\dPEv)$. Hence we have
$$\dim(\Ker(\dP)))=1 \Longleftrightarrow \Ker(\dPv)\subsetneq\Ker(\dPEv).$$
This is true if and only there exists an element $\beta\in\Ker(\dPEv)$ on which $\dPv$ doesn't vanish. This can only be possible if $\dPv(\beta)$ is not zero on $\pder{s}$, where $s$ is the parameter taking into account the moduli of the elliptic curve.
By using Theorem \ref{THM:MAINCOMP} we have $$\dPv(\beta)=\sum_{k=1}^d\dfrac{m(\beta)}{\pi^*\alpha^2}(x_k)$$
and this concludes the proof.
\end{proof}

\noindent This result improves the one in \cite{Kan} where it is proved only for simple ramification. In the same work is proved that, for simple ramification, having the sum in Theorem \ref{THM:RESFORMULA} different from zero for some $\beta\in \Ker(\dPEv)$ is equivalent to ask that the intersection of the quadrics that contain the canonical model of $F$ doesn't contain the point $q^-$ defined before. Unfortunately, in the case of arbitrary ramification, we are not able to prove this equivalence but only one implication.

\begin{thm}
\label{THM:HALFGEO}
With the same hypotesis of Theorem \ref{THM:RESFORMULA}, if we identify $F$ with its canonical model in $\PP H^0(\omega_F)^{\vee}$, then we have
\begin{equation}
\label{EQ:HALFGEO}
q^-\not\in \bigcap_{F\subset Q} Q\Longrightarrow \dim(\Ker(\dP))=1,
\end{equation}
where $Q$ ranges in the set of quadrics of $\PP H^0(\omega_F)^{\vee}$ containing $F$.
\end{thm}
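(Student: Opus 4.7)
The plan is to combine Theorem \ref{THM:RESFORMULA} with Lemma \ref{LEM:QUAKER}: it will suffice to exhibit some $\beta\in\Ker(\dPEv)$ making the sum
$$\sum_{k=1}^{d}\dfrac{m(\beta)}{\pi^{*}\alpha^{2}}(x_{k})$$
non-zero. Using the hypothesis, I fix a quadric $Q$ containing $F$ with $q^{-}\not\in Q$; then Lemma \ref{LEM:QUAKER} tells us that $\beta:=G_{Q}^{-}\in\Sym^{2}(H^{0}(\omega_{F})^{-})$ already lies in $\Ker(\dPEv)$, so the entire theorem reduces to evaluating the sum above for this particular choice of $\beta$.

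The first observation is that, since $F\subset Q$, one has $m(G_{Q})=0$ in $H^{0}(\omega_{F}^{\otimes 2})$, so the decomposition $G_{Q}=G_{Q}^{-}+\pi^{*}\alpha\spr\omega_{Q}$ yields $m(G_{Q}^{-})=-\pi^{*}\alpha\cdot\omega_{Q}$. Hence $m(\beta)/(\pi^{*}\alpha)^{2}=-\omega_{Q}/\pi^{*}\alpha$ as a meromorphic function on $F$, regular at every $x_{k}$ since $c$ is not a branch point, and the sum equals $-\sum_{k}(\omega_{Q}/\pi^{*}\alpha)(x_{k})$. Working in a coordinate $u$ at $c$ with $\alpha=du$ and using $u\circ\pi$ as a local coordinate at each unramified $x_{k}$, the local expansion $\omega_{Q}=f_{k}(u)du$ gives $(\omega_{Q}/\pi^{*}\alpha)(x_{k})=f_{k}(0)$, so that by the standard local description of the trace the above sum is precisely the scalar $\lambda\in\C$ characterized by $tr_{\pi}(\omega_{Q})=\lambda\alpha$.

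In parallel, I would evaluate $G_{Q}$ at the point $q^{-}$. Representing $q^{-}$ by a functional $v^{-}\in H^{0}(\omega_{F})^{\vee}$ vanishing on $H^{0}(\omega_{F})^{-}$ and normalized by $v^{-}(\pi^{*}\alpha)=1$, the $G_{Q}^{-}$ summand evaluates to zero, while the cross term contributes $\omega_{Q}(v^{-})$. Writing $\omega_{Q}=a\pi^{*}\alpha+\omega_{Q}^{0}$ with $\omega_{Q}^{0}\in H^{0}(\omega_{F})^{-}$, this gives $G_{Q}(q^{-})=a$; applying the trace (which annihilates $\omega_{Q}^{0}$ and sends $\pi^{*}\alpha$ to $d\,\alpha$) yields $\lambda=ad$. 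Hence $q^{-}\not\in Q$ is equivalent to $a\neq 0$, which forces $\lambda\neq 0$ and hence the desired sum to be non-zero; Theorem \ref{THM:RESFORMULA} then gives $\dim(\Ker(\dP))=1$.

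The main obstacle is matching the two parallel incarnations of the decomposition $H^{0}(\omega_{F})=H^{0}(\omega_{F})^{-}\oplus\pi^{*}H^{0}(\omega_{E})$: a projective/quadric one (pairing the symmetric tensor $G_{Q}$ with the point $q^{-}$) and a function-theoretic one (computing $tr_{\pi}(\omega_{Q})$ via values at the fiber $\pi^{-1}(c)$). The content of the proof is precisely that both detect the same scalar $a$, so that the algebro-geometric hypothesis on quadrics matches exactly the non-vanishing condition coming from the codifferential formula; everything else is routine local computation. Note that the reverse implication would require showing that \emph{every} $\beta\in\Ker(\dPEv)$ comes (up to the subspace of $\Sym^{2}(H^{0}(\omega_{F})^{-})$ on which the sum automatically vanishes) from $H^{0}(I_{F}(2))$, which is not immediate for higher ramification — hence only one implication is stated.
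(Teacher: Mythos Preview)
Your proof is correct and follows essentially the same approach as the paper: you pick a quadric $Q$ with $q^{-}\notin Q$, use Lemma~\ref{LEM:QUAKER} to get $\beta=G_Q^{-}\in\Ker(\dPEv)$, then compute the sum in Theorem~\ref{THM:RESFORMULA} via $m(G_Q^{-})=-\pi^{*}\alpha\cdot\omega_Q$ and the trace, exactly as the paper does. The only cosmetic difference is that the paper isolates your middle computation (that the sum equals $-Tr_{\pi}(\omega_Q)/\alpha$ at $c$, which vanishes iff $q^{-}\in Q$) as a separate Lemma~\ref{LEM:FUNCTPOINT}, whereas you prove it inline; your closing remark about why the converse fails for higher ramification also matches the paper's discussion.
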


\noindent The proof of the theorem uses some arguments developed in \cite{Kan} that we have summarized in the following Lemma.

\begin{Lem}
\label{LEM:FUNCTPOINT}
Let $Q$ be a quadric of $\PP H^0(\omega_F)^{\vee}$ containing $F$ and denote by $G_Q=G_Q^-+\pi^*\alpha\spr\omega_Q$ one of its equations. Then
$$\sum_{k=1}^d\dfrac{m(G_Q^{-})}{\pi^*\alpha^2}(x_k)=0 \Longleftrightarrow G_Q(q^-)=0\Longleftrightarrow q^- \in Q.$$
\end{Lem}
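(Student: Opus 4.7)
The second equivalence, $G_Q(q^-)=0 \Longleftrightarrow q^-\in Q$, is simply the definition of a point lying on a quadric. For the first equivalence I would split the argument into two computations: evaluating the quadric at $q^-$, and evaluating the sum, and then match both answers through the same scalar.

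Using the canonical splitting $H^0(\omega_F) = H^0(\omega_F)^-\oplus\pi^*H^0(\omega_E)$, I first decompose $\omega_Q = \omega_Q^- + \lambda\,\pi^*\alpha$ with $\omega_Q^-\in H^0(\omega_F)^-$ and $\lambda\in\C$. Let $\tilde q\in H^0(\omega_F)^{\vee}\setminus\{0\}$ be a lift of $q^-$: by definition of $q^-=\PP((H^0(\omega_F)^-)^{\perp})$ every $\phi\in H^0(\omega_F)^-$ satisfies $\phi(\tilde q)=0$, while $\pi^*\alpha(\tilde q)\neq 0$. Writing $G_Q^-=\sum_{i,j}c_{ij}\,\phi_i\spr\phi_j$ with $\phi_i\in H^0(\omega_F)^-$, the evaluation $G_Q^-(\tilde q)=\sum_{i,j}c_{ij}\phi_i(\tilde q)\phi_j(\tilde q)$ vanishes; on the other hand, $(\pi^*\alpha\spr\omega_Q)(\tilde q)=\pi^*\alpha(\tilde q)\cdot\omega_Q(\tilde q)=\lambda(\pi^*\alpha(\tilde q))^2$. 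Hence $G_Q(q^-)=0$ if and only if $\lambda=0$, i.e.\ $\omega_Q\in H^0(\omega_F)^-$.

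Next, I would use $F\subset Q$ to rewrite the sum. Since $m(G_Q)=0$ in $H^0(\omega_F^{\otimes 2})$ one gets $m(G_Q^-)=-\pi^*\alpha\cdot\omega_Q$, so as meromorphic functions on $F$
\begin{equation*}
\frac{m(G_Q^-)}{\pi^*\alpha^2} = -\frac{\omega_Q}{\pi^*\alpha}.
\end{equation*}
Because $c$ is not a branch point, $\pi^*\alpha$ is nonzero at each $x_k$, so
\begin{equation*}
\sum_{k=1}^{d}\frac{m(G_Q^-)}{\pi^*\alpha^2}(x_k) = -\sum_{k=1}^{d}\frac{\omega_Q}{\pi^*\alpha}(x_k).
\end{equation*}

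To close the argument I would identify the right-hand side with the trace of $\omega_Q$ at $c$. Using the coordinate $v$ on $N\ni c$ with $\alpha=dv$, and local coordinates $u_k$ at $x_k$ on which $\pi$ restricts to the identity (as $c$ is unramified), the ratio $\omega_Q/\pi^*\alpha$ at $x_k$ is the coefficient of the pullback of $\omega_Q$ via the $k$-th local section $\sigma_k$ of $\pi$, evaluated at $c$; summing over all $d$ sheets is the definition of $(tr_{\pi}(\omega_Q)/\alpha)(c)$. Now $tr_{\pi}(\omega_Q^-)=0$ by the definition of $H^0(\omega_F)^-$ and $tr_{\pi}(\pi^*\alpha)=\Deg(\pi)\,\alpha$ by the identity $tr_{\pi}\circ\pi^*=\Deg(\pi)\,\Id$ recalled in Section \ref{SEC:1}, so $tr_{\pi}(\omega_Q)=\lambda\,\Deg(\pi)\,\alpha$ and the sum equals $-\lambda\,\Deg(\pi)$. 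This vanishes exactly when $\lambda=0$, which matches the first step and closes the chain of equivalences. The only mildly delicate point is the last identification of $\sum_k(\omega_Q/\pi^*\alpha)(x_k)$ with $(tr_{\pi}(\omega_Q)/\alpha)(c)$, but this is a direct unwinding of the trace over an unramified fibre and presents no real obstacle.
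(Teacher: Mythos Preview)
Your proof is correct and follows essentially the same route as the paper: both use $m(G_Q)=0$ to rewrite $m(G_Q^-)=-\pi^*\alpha\cdot\omega_Q$, identify $\sum_k(\omega_Q/\pi^*\alpha)(x_k)$ with $(tr_\pi(\omega_Q)/\alpha)(c)$, and conclude that the sum vanishes iff $\omega_Q\in H^0(\omega_F)^-$, which is equivalent to $q^-\in Q$. The only cosmetic difference is that you track the scalar $\lambda$ in the decomposition $\omega_Q=\omega_Q^-+\lambda\pi^*\alpha$, whereas the paper tracks $r$ via $tr_\pi(\omega_Q)=r\alpha$; these are related by $r=\lambda\Deg(\pi)$.
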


\begin{proof}
The last statement is clear by definition so we really need to prove only the first one. First of all observe that we can choose the coordinate $s$ in such a way that $\alpha$ is locally given by $ds$.
Then, as $G_Q^{-}=G_Q-\pi^*\alpha\spr\omega_Q$ and $Q\in H^0(I_F(2))=\Ker(m)$ by hypotesis, one has
$$\sum_{k=1}^d\dfrac{m(G_Q^{-})}{\pi^*\alpha^2}(x_k)=-\sum_{k=1}^d\dfrac{m(\pi^*\alpha\spr\omega_Q)}{\pi^*\alpha^2}(x_k)=-\dfrac{Tr_{\pi}(\omega_Q)}{\alpha}(c).$$
But $Tr_{\pi}(\omega_Q)$ is an element of $H^0(\omega_E)$ so it is equal to $r\cdot \alpha$ for some $r$. Thus we have
$$\sum_{k=1}^d\dfrac{m(G_Q^{-})}{\pi^*\alpha^2}(x_k)= - r$$
which is zero if and only if $\omega_Q$ has trace $0$, i.e. if and only if $\omega_Q\in H^0(\omega_F)^{-}$. This happens if and only if $(\pi^*\alpha)^{\otimes 2}$ doesn't appear in the equation of $Q$, i.e. if and only if $q^{-}\in Q$.
\end{proof}

\noindent Using Lemma \ref{LEM:QUAKER} and Lemma \ref{LEM:FUNCTPOINT} the proof of Theorem \ref{THM:HALFGEO} is straightforward.

\begin{proof}[Proof of Theorem \ref{THM:HALFGEO}]
Assume that 
$$q^-\not\in \bigcap_{F\subset Q} Q.$$
Then, there exists a quadric which cointains $F$ but doesn't contain $q^{-}$. Denote by $G_Q$ its equation. By Lemma \ref{LEM:QUAKER} we know that $\beta=\gamma(G_Q)=G_Q^{-}\in \Ker(\dPEv)$ and by Lemma \ref{LEM:FUNCTPOINT} we have that 
$$\sum_{k=1}^d\dfrac{m(\beta)}{\pi^*\alpha^2}(x_k)\neq 0.$$ 
Hence, using Theorem \ref{THM:RESFORMULA} we have the thesis.
\end{proof}

\begin{rmk}
In \cite{Kan}, with different methods, it is proved that $H^0(I_F(2))=\Ker(\dPEv)$ if the ramification is simple. This fact is exactly what allows to prove the converse implication of Theorem \ref{THM:HALFGEO}.
\end{rmk}

\begin{rmk}
Notice that $H^0(I_F(2))=\Ker(\dPEv)$ if and only if all the ramification indices are equal to $2$. Indeed, denote by $R_{red}$ the reduced divisor whose support equals the support of the ramification divisor. Let $\bar{R}$ be $R-R_{red}$. From Riemann-Hurwitz we have
$$2g-2 = \deg(R)=\deg(R_{red})+\deg(\bar{R})=n+\deg(\bar{R}).$$
Hence, from Equation (\ref{EQ:DIMPED}) one has
$$\dim\Ker d_oP_E^{\vee}=h^0(I_F(2))+\deg(\bar{R}).$$
As $\bar{R}\geq 0$ and is trivial if and only if all the ramification indices are equal to $2$ the claim follows. In particular, the converse implication of (\ref{EQ:HALFGEO}) in Theorem \ref{THM:HALFGEO} holds for coverings whose ramification indices are all equal to $2$.
\end{rmk}

\noindent We conclude this section by proving the existence of an exact sequence which should help to measure, in a more intrinsic way, how much $H^0(I_F(2))$ and $\Ker(\dPEv)$ differ.

\begin{prop}
Under the hypotesis of Theorem \ref{THM:HALFGEO} there is an exact sequence
\begin{equation}
\xymatrix{
0 \ar[r]& H^0(I_F(2))\ar@{^{(}->}[r]^\gamma & \Ker(\dPEv)\ar[r] & \dfrac{\Ker(dh^{\vee})}{H^0(\omega_F)\spr\pi^*H^0(\omega_E)\ar[r]} & 0.
}
\end{equation}
\end{prop}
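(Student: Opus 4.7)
The plan is to realize the sequence via a double-quotient argument in the auxiliary vector space
$$K := pr^{-1}(\Ker(\dPEv)) \subseteq \Sym^2(H^0(\omega_F)),$$
where $pr$ is the projection induced by the splitting $H^0(\omega_F) = H^0(\omega_F)^- \oplus \pi^*H^0(\omega_E)$. The crucial input is the commutative square $\dPEv \circ pr = dh_E^{\vee} \circ m$ recorded in Remark \ref{REM:SPLI} and used in Lemma \ref{LEM:QUAKER}, which gives the equivalent description $K = m^{-1}(\Ker(dh_E^{\vee}))$. Throughout I interpret the denominator in the statement as the image of $H^0(\omega_F)\spr\pi^*H^0(\omega_E)$ under $m$, namely $H^0(\omega_F)\cdot\pi^*\alpha \subseteq H^0(\omega_F^{\otimes 2})$, since the symmetric product naturally lives in $\Sym^2(H^0(\omega_F))$ whereas $\Ker(dh_E^{\vee})$ lives in $H^0(\omega_F^{\otimes 2})$.

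Restricting $pr$ and $m$ to $K$ produces two short exact sequences sharing the same middle term:
\begin{align*}
0 \to H^0(\omega_F)\spr\pi^*H^0(\omega_E) &\to K \xrightarrow{pr} \Ker(\dPEv) \to 0, \\
0 \to H^0(I_F(2)) &\to K \xrightarrow{m} \Ker(dh_E^{\vee}) \to 0.
\end{align*}
The first is tautological from $\Ker(pr) = H^0(\omega_F)\spr\pi^*H^0(\omega_E) \subseteq K$ together with the definition of $K$. For the second, $\Ker(m) = H^0(I_F(2))$ lies in $K$ because of the commutativity above, and the surjectivity of $m|_K$ onto $\Ker(dh_E^{\vee})$ follows from the surjectivity of $m$ itself (projective normality of the non-hyperelliptic canonical curve $F$): given $v \in \Ker(dh_E^{\vee})$ and a preimage $\xi$ under $m$, the identity $\dPEv(pr(\xi)) = dh_E^{\vee}(v) = 0$ places $\xi$ in $K$.

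Next I would check that the two subspaces of $K$ meet trivially, i.e.~$H^0(I_F(2)) \cap H^0(\omega_F)\spr\pi^*H^0(\omega_E) = 0$: any element of this intersection is of the form $\omega\spr\pi^*\alpha$ with $\omega\cdot\pi^*\alpha = 0$ in $H^0(\omega_F^{\otimes 2})$, which forces $\omega = 0$ since $\pi^*\alpha$ is a nonzero section. Consequently, the natural surjection
$$K \twoheadrightarrow K / \left( H^0(I_F(2)) \oplus H^0(\omega_F)\spr\pi^*H^0(\omega_E) \right)$$
factors through both short exact sequences above and yields a canonical isomorphism
$$\frac{\Ker(\dPEv)}{\gamma(H^0(I_F(2)))} \;\cong\; \frac{\Ker(dh_E^{\vee})}{H^0(\omega_F)\cdot\pi^*\alpha}.$$
Combined with the injectivity of $\gamma$ already established in Lemma \ref{LEM:QUAKER}, this produces the stated short exact sequence. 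Explicitly, the connecting map sends $\eta \in \Ker(\dPEv)$ to the class $[m(\tilde\eta)]$ for any lift $\tilde\eta \in \Sym^2(H^0(\omega_F))$ with $pr(\tilde\eta) = \eta$; two lifts differ by an element of $\Ker(pr)$, whose $m$-image is exactly $H^0(\omega_F)\cdot\pi^*\alpha$, so the class is well-defined.

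Being a purely formal diagram chase once the auxiliary $K$ is in play, the argument presents no serious obstacle; the only point requiring care is the notational identification of $H^0(\omega_F)\spr\pi^*H^0(\omega_E) \subset \Sym^2(H^0(\omega_F))$ with its isomorphic image $H^0(\omega_F)\cdot\pi^*\alpha \subset H^0(\omega_F^{\otimes 2})$, which is necessary for the denominator in the stated quotient to be meaningful.
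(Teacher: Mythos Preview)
Your argument is correct and essentially matches the paper's: the paper assembles an extended commutative diagram by adjoining the injection $\epsilon\colon H^0(\omega_F)\spr\pi^*H^0(\omega_E)\hookrightarrow \Ker(dh_E^{\vee})$ and its cokernel, and then applies the snake lemma to the two central columns, which is precisely the double-quotient computation you carry out by hand via the auxiliary space $K=pr^{-1}(\Ker\dPEv)=m^{-1}(\Ker dh_E^{\vee})$. Your reading of the denominator as the $m$-image $H^0(\omega_F)\cdot\pi^*\alpha$ and of $dh^{\vee}$ as $dh_E^{\vee}$ agrees with the paper's setup (the target of that map in the paper's diagram is $T_o^{\vee}\cH_E$).
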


\begin{proof}
Starting from diagram (\ref{EQ:DIAG1}) it is easy to see that the composition of the inclusion of $H^0(\omega_F)\spr\pi^*H^0(\omega_E)$ with $m$ has image in $H^0(\omega_F^{\otimes 2})$ but also in the kernel of $dh^{\vee}$. Hence there is a map $$\epsilon:H^0(\omega_F)\spr\pi^*H^0(\omega_E)\rightarrow \Ker(dh^{\vee}),$$
which is easily proven to be injective as we have done with $\gamma$. 
We can also complete the diagram on the right by adding two (trivial) vertical arrows. 
The complete diagram looks like this
\begin{equation}
\label{EQ:BIGDIAG}
\xymatrix@C=20pt{
 &  & 0 & 
\\
0\ar[r] &
	\Ker\dPEv\ar@{^{(}->}[r]^-{j}  & 	
	\Sym^2(H^0(\omega_F)^-)\ar[u] \ar[r]^-{d_oP_E^{\vee}}  & T^{\vee}_{o}\cH_E \ar[r] & (\Ker \dPE)^{\vee}\ar[r] & 0
\\
0\ar[r] & H^0(I_F(2))\ar@{^{(}->}[r]^-{\iota}\ar@{^{(}->}[u]^-{\gamma} & 
\Sym^2(H^0(\omega_F))\ar@{->>}[u]^-{pr} \ar@{->>}[r]^-{m} & 
H^0(\omega_F^{\otimes 2})\ar[u]_{dh^{\vee}}\ar[r] & 0\ar[u]
\\
 & 0\ar[u]\ar[r] & H^0(\omega_F)\spr \pi^*H^0(\omega_E) \ar@{^{(}->}[r]^-{\epsilon}\ar@{^{(}->}[u] & \Ker dh^{\vee}\ar[u]\ar[r] & \Coker \epsilon\ar[r]\ar[u] & 0 
\\
 & & 0\ar[u] & 0\ar[u]
}
\end{equation}
By using the snake lemma on the central columns one obtain the wanted sequence. 
\end{proof}


\section{An interesting family of curves}
\label{SEC:4}

\noindent In this section we review the first example, due to Pirola, of a non-trivial family of coverings of elliptic curves with $2$ independent directions along which the Prym map $\Phi$ is constant. Hence the kernel of the differential of the Period map associated to $\Phi$ has dimension greater than $1$. The existence of the family is proved in \cite{Pir} but the proof is not constructive and uses a framework different form ours. After some notations and a brief idea of how to prove the existence of this family (for details, see \cite{Pir}), we will prove that $q^-$ belongs to the only quadric that contains $F$ and that for all the elements of $\Ker(\dPEv)$ the sum in Theorem \ref{THM:RESFORMULA} is $0$.
\vspace{2mm}

\noindent In order to prove the existence of such a family, let $G\simeq \Z_3$ and consider the space $\cH^G$ of Galois coverings $\pi:F\rightarrow E$ of degree $3$ with ramification given by $3$ points (so the number of branch points is exactly $3$ and the genus of $F$ is $4$) modulo the identifications given by a commutative diagram like
$$
\xymatrix{
F_1\ar[d]_{\pi_1}\ar@{-->}[r]^-{\simeq} & F_2\ar[d]^-{\pi_2} \\
E_1\ar@{-->}[r]_-{\simeq} & E_2
}
$$
With this type of identification of two coverings the dimension of $\cH^G$ is $3$. Note that, with this definition, a covering $\pi:F\rightarrow E$ and the covering obtained by composing $\pi$ with a translation of $E$ are equivalent: they represent the same point in $\cH^G$. 
\vspace{2mm}

\noindent Fix a generator $g$ of $G$ and $\rho$, a primitive root of $1$ of order $3$. If $V$ is a vector space on which $G$ acts, we will denote by $V_{\rho^k}$ the subspace where $g$ acts as the multiplication by $\rho^k$. As $\pi$ is the quotient by the group $G$, the $G$-action on $F$ induces several other $G$-actions. We will do now a small list of the one that we are going to use in this section.
\begin{enumerate}[a)]
\item The canonical action on $H^0(\omega_F)$ via pullback: by changing, if necessary, $g$ with $g^2$, we have
\begin{equation}
\label{EQ:DECH0}
H^0(\omega_F)=H^0(\omega_F)_{1}\oplus H^0(\omega_F)_{\rho}\oplus H^0(\omega_F)_{\rho^2}=\pi^*H^0(\omega_E)\oplus \C^{2}_{\rho}\oplus\C^{1}_{\rho^2}.
\end{equation}
\item \label{ENUM:2} A canonical $G$-action on $H^0(\omega_F)^-$ which is simply the restriction of the canonical representation on $H^0(\omega_F)$.
\item \label{ENUM:5} An action on $\Sym^2(H^0(\omega_F))$, whose decomposition in irreducible subrepresentations is given by
$$\Sym^2(H^0(\omega_F))=\C_{1}^{3}\oplus \C_{\rho}^{3}\oplus \C_{\rho^2}^{4}.$$
\item an action on $H^0(\omega_F^{\otimes 2})$ using the surjectivity of $m$ by imposing that $m$ becomes a morphism of $G$-vector spaces and hence on its dual $H^1(T_F)$.
\item \label{ENUM:4} An action on $H^0(I_F(2))$ as the kernel of $m$.
\item \label{ENUM:1} An action on the Prym $\Phi(\pi)$: this is induced at level of tangent spaces (as the tangent space $T_0\Phi(\pi)$ is $H^0(\omega_F)^-$) and it is compatible with the quotient by the periods' lattice.
\item \label{ENUM:3} An action of $G$ on $\PP H^0(\omega_F)^{\vee}=\PP$ as every automorphism of $F$, seen as a canonical curve in $\PP$ lifts to an automorphism of the whole space.
\end{enumerate}
All these actions, by construction, are compatible via the usual identification. For example, if we interpret $H^0(\omega_F)$ as the space of equations of hyperplanes of $\PP$ an invariant hyperplane in $\PP$ has an equation which is an eigenvector of $g$ in $H^0(\omega_F)$.
\vspace{2mm}

\noindent One has a Prym map $\tilde{\Phi}:\cH^G\rightarrow \A_{g-1}$ and a period map $\tilde{P}:\cH^G\rightarrow \mD$. We stress that, by construction, if we prove that $\dim(\Ker(d_{o}\tilde{P}))>k$ then, the period map $P$ associated to the Prym map of a local family of coverings with $\pi$ as central fiber will have kernel of dimension at least $k+1$.
\vspace{2mm}

\noindent The rough idea to prove that there exists a family of coverings in $\cH^G$ which gets contracted by $\tilde{\Phi}$ is to observe, as we have done in \ref{ENUM:1}), that the Prym map $\tilde{\Phi}$ factors through the inclusion of $\A_{g-1}^{G}$, the space of abelian varieties of dimension $g-1$ with an action of $G$, in $\A_{g-1}$. If we denote by $\mD^G$ a period domain for $\A_{g-1}^G$ we have an analogous period map $\tilde{P}^G:\cH^G\rightarrow \mD^G$. We want to get a bound on the dimension of the image $d\tilde{P}^G$. 
\vspace{2mm}

\noindent Clearly, the image of $d\tilde{P}^G$ has dimension at most the dimension of $$T\mD^{G}=\Sym^2(H^0(\omega_F)^-)^G$$
and the same bound holds, by construction, for the dimension of the image of $d\tilde{P}$.
As $\Sym^2(H^0(\omega_F)^-)^G$ is isomorphic, by \ref{ENUM:2}), to 
$$H^0(\omega_F)_{\rho}\spr H^0(\omega_F)_{\rho^2},$$
we have that its dimension is $2$. As $T_{[\pi]}\cH^G$ has dimension $3$ this implies that the kernel of $d\tilde{P}$ has dimension at least $1$ and the existence of the family is proved. 

\begin{prop}
\label{PROP:QUACONE}
Let $\pi:F\rightarrow E$ with $[\pi]\in \cH^G$, identify $F$ with its canonical model and let $Q$ be the only quadric containing $F$. Then $q^-\in Q$ and either $$H^0(I_F(2))\subset \Sym^2(H^0(\omega_F))_{\rho}\quad \mbox{ or }\quad H^0(I_F(2))\subset \Sym^2(H^0(\omega_F))_{\rho^2}.$$
\end{prop}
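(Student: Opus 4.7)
The plan is to exploit the $G\simeq\mZ_3$-action on everything in sight. Since $F\subset \PP H^0(\omega_F)^{\vee}$ and hence its ideal $I_F$ are $G$-stable, the space $H^0(I_F(2))$ is a $G$-subrepresentation of $\Sym^2 H^0(\omega_F)$; being one-dimensional (for $g=4$ and $F$ non-hyperelliptic one has $\dim H^0(I_F(2))=\tfrac{(g-2)(g-3)}{2}=1$), it sits inside exactly one eigenspace $\Sym^2(H^0(\omega_F))_\chi$ for some $\chi\in\{1,\rho,\rho^2\}$. Once the case $\chi=1$ is excluded, the conclusion is immediate from the explicit decompositions
\[
\Sym^2(H^0(\omega_F))_\rho \;=\; \bigl(\pi^*\alpha\spr H^0(\omega_F)_\rho\bigr)\oplus \Sym^2 H^0(\omega_F)_{\rho^2},
\]
\[
\Sym^2(H^0(\omega_F))_{\rho^2} \;=\; \bigl(\pi^*\alpha\spr H^0(\omega_F)_{\rho^2}\bigr)\oplus \Sym^2 H^0(\omega_F)_\rho,
\]
because in either case the $\pi^*\alpha$-component $\omega_Q$ of the canonical splitting $G_Q=G_Q^-+\pi^*\alpha\spr\omega_Q$ lands in $H^0(\omega_F)_\rho$ or $H^0(\omega_F)_{\rho^2}$, both subspaces of $H^0(\omega_F)^-$; Lemma \ref{LEM:FUNCTPOINT} then yields $q^-\in Q$.

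The heart of the matter is therefore to rule out $\chi=1$, i.e.\ to show that the $G$-equivariant multiplication $m$ is injective on $\Sym^2(H^0(\omega_F))_1$. Since $m$ is surjective and $G$-equivariant, this reduces to the dimension bound $\dim H^0(\omega_F^{\otimes 2})_1\geq \dim \Sym^2(H^0(\omega_F))_1=3$. By Galois descent one has $H^0(\omega_F^{\otimes 2})_1 = H^0\bigl(E,(\pi_*\omega_F^{\otimes 2})^G\bigr)$, and I would identify the invariant subsheaf by a local computation at each branch point $b_j$. Using the coordinates $w_j=z_j^3$ as in Section \ref{SEC:1}, the stalk of $\pi_*\omega_F^{\otimes 2}$ at $b_j$ is the free $\C[[w_j]]$-module on the basis $\{(dz_j)^2,\,z_j(dz_j)^2,\,z_j^2(dz_j)^2\}$, which diagonalizes the $G$-action with characters $\rho^2,\,1,\,\rho$ respectively. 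The invariant generator $z_j(dz_j)^2=(dw_j)^2/(9w_j)$ has a simple pole at $b_j$ relative to the local generator $(dw_j)^2$ of $\omega_E^{\otimes 2}$, so $(\pi_*\omega_F^{\otimes 2})^G\cong\omega_E^{\otimes 2}(B)\cong\OO_E(B)$ with $B=b_1+b_2+b_3$; on the elliptic curve $E$ this line bundle has $h^0=\deg B=3$, as required.

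The only real obstacle is this local pushforward computation; everything else is routine bookkeeping with $G$-representations. Applying the same technique to the remaining characters yields $\dim H^0(\omega_F^{\otimes 2})_\rho=2$ and $\dim H^0(\omega_F^{\otimes 2})_{\rho^2}=4$, and combined with the character dimensions $(3,3,4)$ of $\Sym^2 H^0(\omega_F)$ computed above this would in fact pin down $H^0(I_F(2))\subset\Sym^2(H^0(\omega_F))_\rho$, slightly sharpening the ``either-or'' of the proposition.
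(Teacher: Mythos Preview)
Your argument is correct and follows the same overall strategy as the paper: both observe that $H^0(I_F(2))$ is a one-dimensional $G$-subrepresentation of $\Sym^2 H^0(\omega_F)$, hence contained in a single eigenspace, and both exclude the trivial character by showing $\dim H^0(\omega_F^{\otimes 2})^G = 3 = \dim \Sym^2(H^0(\omega_F))^G$ via the $G$-equivariant surjection $m$. The deduction of $q^-\in Q$ is also essentially the same; the paper simply notes that $(\pi^*\alpha)^{\otimes 2}\in \Sym^2(H^0(\omega_F))_1$ cannot appear in $G_Q$, which is exactly the content of your appeal to Lemma \ref{LEM:FUNCTPOINT}.

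Where the two proofs genuinely diverge is in the computation of $\dim H^0(\omega_F^{\otimes 2})^G$. The paper invokes the moduli-theoretic identification $T_{[\pi]}\cH^G\simeq H^1(T_F)^G$ from \cite{Pir} and the known dimension $\dim\cH^G=3$; your route is instead a direct sheaf computation of $(\pi_*\omega_F^{\otimes 2})^G\cong\omega_E^{\otimes 2}(B)$ via the local model $w=z^3$. Your method is self-contained (it does not rely on the Hurwitz-space description) and, as you note, extends uniformly to the other characters to give the full decomposition $\dim H^0(\omega_F^{\otimes 2})_{(1,\rho,\rho^2)}=(3,2,4)$, thereby pinning down $H^0(I_F(2))\subset\Sym^2(H^0(\omega_F))_\rho$ under the paper's convention. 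The paper's argument is shorter but less explicit. One small caveat on your sharpening: the identification of the specific nontrivial character uses that the local monodromies at the three branch points all coincide (they do, since for a connected abelian cover of $E$ the local monodromies must sum to zero in $\mZ/3$, forcing all three to be the same generator); it would be worth making this point explicit.
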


\begin{proof}
There exists only a quadric containing $F$ because $g(F)=4$. More precisely $F$ is the complete intersection of a quadric $Q$ and a cubic surface $C$. Let $G_Q\in H^0(I_F(2))$ be an equation for $Q$. Being $F$ invariant under the $G$-action introduced in \ref{ENUM:3}), we have that the orbit of $G_Q$ under the action given in \ref{ENUM:4}), is simply given by itself plus, possibly, some of its multiple by elements in $\C^*$. The key point now is to see that $H^0(I_F(2))_1=0$. In order to prove this observe that, by construction, we have an exact sequence of $G$-vector spaces given by
$$
\xymatrix{
0\ar[r] & H^0(I_F(2))\ar[r] & \Sym^2(H^0(\omega_F))\ar[r] & H^0(\omega_F^{\otimes 2})\ar[r] & 0
}
$$
Hence, by taking invariant parts and dimensions we have
$$\dim(H^0(I_F(2))^G)=\dim(\Sym^2(H^0(\omega_F))^G)-\dim(H^0(\omega_F^{\otimes 2})^G).$$
As claimed in \cite{Pir}, we can identify $T_{[\pi]}\cH^G$ with $H^1(T_F)^G=(H^0(\omega_F^{\otimes 2})^{\vee})^G$. Hence, we have $\dim(H^0(\omega_F^{\otimes 2})^G)=3$. Using \ref{ENUM:5}) we have that also $\dim(\Sym^2(H^0(\omega_F))^G)=3$ so, as claimed, $H^0(I_F(2))_1=0$.
\vspace{2mm}

\noindent As consequence we have either $G_Q\in \Sym^2(H^0(\omega_F))_{\rho}$ or $G_Q\in \Sym^2(H^0(\omega_F))_{\rho^2}$. Note that in both cases, as $\pi^*H^0(\omega_E)^2\subset \Sym^2(H^0(\omega_F))_{1}$, we have $G_Q(q^-)=0$ so $q^-\in Q$.
\end{proof}

\begin{Lem}
\label{LEM:G13s}
Let $\pi:F\rightarrow E$ with $[\pi]\in \caH^G$ and assume that $F$ is not hyperelliptic. Denote by $a_1,a_2$ and $a_3$ the $3$ ramification points of $\pi$. Let $L$ be a $g^1_3$. Then:
\begin{itemize}
\item $L$ is $G$-invariant, i.e. $g^*L=L$;
\item $h^0(\OO_F(3a_i))=1$;
\item If $L'$ is a $g_3^1$ then $L\simeq L'$, i.e. there is only one $g_3^1$ on $F$.
\end{itemize}
\end{Lem}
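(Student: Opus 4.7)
The plan is to exploit two geometric observations: any $G$-invariant non-constant morphism from $F$ to $\PP^1$ descends to a morphism $E\to\PP^1$, which is impossible since $E$ is elliptic; and the $G$-equivariance of any $g^1_3$ on $F$ imposes strong constraints on how the ramification points of $\pi$ distribute in the fibres of the associated degree-three map.

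For the first bullet, I will invoke the classical description of the canonical model of a non-hyperelliptic curve of genus $4$: it lies on a unique quadric $Q\subset \PP^3$, and the set of $g^1_3$'s on $F$ corresponds to the rulings of $Q$ (one if $Q$ is a cone, two if $Q$ is smooth) and hence has cardinality at most two. Since $G\simeq \Z/3\Z$ acts on this set by pullback and $\Z/3\Z$ admits no non-trivial action on a set of cardinality at most two, the action is trivial, i.e.\ $g^*L=L$.

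For the second bullet, I would assume for contradiction $h^0(\OO_F(3a_i))\geq 2$. Non-hyperellipticity ensures $|3a_i|$ is base-point free---otherwise one would produce a $g^1_2$---so it defines a morphism $\phi\colon F\to \PP^1$ of degree $3$ with $\phi^{-1}(\phi(a_i))=3a_i$. The line bundle $\OO_F(3a_i)$ is tautologically $G$-invariant since $g(a_i)=a_i$, so $\phi$ is equivariant for a suitable $G$-action on $\PP^1$. In coordinates where $\pi$ is $z\mapsto z^3$ near $a_i$ we may take $\phi(z)=z^3$, and since $g$ acts locally as $z\mapsto \rho z$, the identity $\phi(\rho z)=\phi(z)$ shows that $g$ acts as the identity in a neighbourhood of $\phi(a_i)\in \PP^1$, hence on all of $\PP^1$ by analytic continuation. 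Therefore $\phi$ is $G$-invariant and factors as $\phi=\tilde\phi\circ \pi$ with $\tilde\phi\colon E\to \PP^1$ of degree one, contradicting that $E$ is elliptic.

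For the third bullet, I would take any $g^1_3$ $L$, which is $G$-invariant by the first bullet, and study the $G$-equivariant morphism $\phi_L\colon F\to \PP^1$. If the induced $G$-action on $\PP^1$ were trivial the descent argument above would again produce a degree-one morphism $E\to \PP^1$; hence the action must be non-trivial, with two fixed points $p_1,p_2$ on which $g$ acts on tangent lines by characters $\rho$ and $\rho^2$ respectively. A local expansion of $\phi_L$ at $a_i$, combined with the fact that the character of $g$ on $T_{a_i}F$ is $\rho$, shows that $\phi_L$ is unramified at $a_i$ if $\phi_L(a_i)=p_1$ and has ramification order exactly $2$ if $\phi_L(a_i)=p_2$. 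Writing $n_j$ for the number of ramification points sent to $p_j$, and using that non-fixed points of $F$ in the fibre over a fixed point of $\PP^1$ come in $G$-orbits of size three, the two fibres yield the equalities $n_1+3m_1=3$ and $2n_2+3m_2=3$ with $m_j\geq 0$, which force $n_2=0$ and therefore $n_1=3$. The fibre of $\phi_L$ over $p_1$ is then exactly $a_1+a_2+a_3$, so $L\cong \OO_F(a_1+a_2+a_3)$; this identification is independent of the chosen $L$, proving uniqueness. The main obstacle is the local character computation: one must carefully pair the character $\rho$ on $T_{a_i}F$ with the character at $\phi_L(a_i)\in \PP^1$ to obtain the exact ramification order, and then convert the $G$-invariance of the fibres into the arithmetic constraint that rules out $n_2\neq 0$.
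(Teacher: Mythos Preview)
Your proof is correct but follows a genuinely different route from the paper's.

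For the first bullet the two arguments coincide. For the second bullet, the paper argues cohomologically: by Riemann--Roch and Serre duality $h^0(\OO_F(3a_i))=h^0(\omega_F(-3a_i))$, and then the section $\pi^*\alpha\in H^0(\omega_F(-2a_i))\setminus H^0(\omega_F(-3a_i))$ (it vanishes to order exactly $2$ at each $a_i$) forces $h^0(\omega_F(-3a_i))<h^0(\omega_F(-2a_i))=2$. Your equivariant-descent argument is more geometric and avoids naming any section; one small point is that the shortcut ``we may take $\phi(z)=z^3$'' in the coordinate where $g(z)=\rho z$ is not literally available without further coordinate changes, but the essential computation---that the derivative of the induced M\"obius transformation $\bar g$ at $\phi(a_i)$ is $\rho^3=1$, hence $\bar g=\id$---goes through by comparing leading terms. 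For the third bullet, the paper writes $L\simeq\OO_F(a_1+A+B)$, uses $G$-invariance and non-hyperellipticity to force $A,B\in\{a_1,a_2,a_3\}$, and then eliminates all cases except $L\simeq\OO_F(a_1+a_2+a_3)$ via the second bullet; your fixed-point analysis of the equivariant map $\phi_L\colon F\to\PP^1$ reaches the same identification $L\simeq\OO_F(a_1+a_2+a_3)$ without any case-work and without invoking the second bullet at all.

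What each approach buys: the paper's argument is entirely character-free and uses only Riemann--Roch plus the explicit section $\pi^*\alpha$, so it is more elementary. Your approach is more uniform---one equivariant-descent idea drives both the second and third bullets---and makes the identification $L\simeq\OO_F(a_1+a_2+a_3)$ transparent. It does, however, rely on the fact that $g$ acts on $T_{a_i}F$ by the \emph{same} character $\rho$ at all three points $a_i$; this is true (the monodromy constraint $e_1+e_2+e_3\equiv 0\pmod 3$ with $e_i\in\{1,2\}$ forces $e_1=e_2=e_3$), but you should state it explicitly, since otherwise the roles of $p_1$ and $p_2$ could swap from one $a_i$ to another and the counting $n_1+n_2=3$ would break down.
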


\begin{proof}
Recall that every curve of genus $4$ is trigonal and moreover, the number of $g^1_3$ is at most $2$. If there is only one $g_3^1$ clearly it is $G$-invariant. If there are $2$, as $G$ has order $3$ and acts on a set of two elements, it has to fix both of them.
\vspace{2mm}

\noindent Now let's prove that $h^0(\OO_F(3a_i))=1$. The Riemann-Roch formula for $\OO_{F}(3a_i)$ is
$$h^0(\OO_F(3a_i))-h^1(\OO_F(3a_i))=\deg(h^0(\OO_F(3a_i)))-4+1=0$$
so, by Serre duality, we have
$$h^0(\OO_F(3a_i))=h^0(\omega_F(-3a_i)).$$
From 
$$0\rightarrow \omega_F(-3a_i)\rightarrow \omega_F(-2a_i)\rightarrow \omega_F(-2a_i)|_{a_i}\rightarrow 0$$
one has $H^0(\omega_F(-3a_i))\leq H^0(\omega_F(-2a_i))$. In particular, as $F$ is not hyperelliptic we obtain that the dimension of $H^0(\omega_F(-3a_i))$ is either $1$ or $2$. Moreover, $h^0(\omega_F(-3a_i))=2$ if and only if $H^0(\omega_F(-3a_i))= H^0(\omega_F(-2a_i))$. But this cannot happen as the pullback $\eta$ of a non-zero holomorphic form on $E$ has a zero of multiplicity $2$ exactly in the ramification points so there is at least one element in $H^0(\omega_F(-2a_i))\setminus H^0(\omega_F(-3a_i))$. Hence $h^0(\OO_F(3a_i))=1$ as claimed.
\vspace{2mm}

\noindent Recall that on $F$ there are at most two $g_3^1$ and they are related by $$L\otimes L'=\omega_F=\OO_F(2a_1+2a_2+2a_3).$$ 
Hence we will conclude by proving that $L=\OO_F(a_1+a_2+a_3)$.
Let $A,B$ in $F$ such that $L\simeq \OO_F(a_1+A+B)$. As $a_1$ is invariant and the same holds for $L$, we have that $\OO_F(g(A)+g(B))=\OO_F(A+B)$. Therefore, as $F$ is not hyperelliptic, also the equality of divisors $g(A)+g(B)=A+B$ has to hold. Moreover, as $g$ has order $3$, it cannot exchange $A$ and $B$: we have proved that $A$ and $B$ are ramification points. If we assume that $L\neq \OO_F(a_1+a_2+a_3)$ there are several possibilities:
\begin{description}
\item [$A=a_1=B$] This is impossible as we would have $$2=h^0(L)=h^0(\OO_F(3a_1))=1.$$
\item [$A=a_1\neq B$] Assume that $A=a_1$ and $B=a_2$ so that $L\simeq \OO_{F}(2a_1+a_2)$. Let $C,D\in F$ such that $\OO_{F}(2a_1+a_2)\simeq \OO_{F}(a_3+C+D)$. As before, we have that $C$ and $D$ are ramification points and as $F$ is not hyperelliptic the only possible option is to have $C=D=a_3$. But then, again, we have a contradiction
$$2=h^0(\OO_{F}(L))=h^0(\OO_{F}(2a_1+a_2))=h^0(\OO_{F}(3a_3))=1.$$
\item [$A\neq a_1= B$] This case is analogous to the previous one.
\item [$A=B\neq a_1$] This case is analogous to the second one.
\end{description}
Hence, we have proved that $L=\OO_F(a_1+a_2+a_3)$ and thus that $L\simeq L'$ and there is only a $g_3^1$ on $F$. 
\end{proof}

\begin{prop}
Let $\pi:F\rightarrow E$ with $[\pi]\in \caH^G$ and assume that $F$ is not hyperelliptic. Denote by $Q$ be the only quadric containing the canonical curve $F$. Then $Q$ is a quadric cone with vertex $V$ and $V\not\in F$. Moreover, the hyperplane $H^-$ is tangent to the cone and the $3$ ramification points of $\pi$ lie on a line on the cone.
\end{prop}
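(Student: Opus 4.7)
The plan is to deduce each of the four claims in sequence from Lemma~\ref{LEM:G13s}, which tells us that $F$ carries a unique $g^1_3$, namely $|L|=|\OO_F(a_1+a_2+a_3)|$. A classical fact about canonical curves of genus $4$ is that the unique quadric $Q$ containing $F$ is smooth precisely when $F$ has two distinct $g^1_3$'s (cut out by the two rulings) and is a rank-$3$ cone when there is only one; uniqueness therefore forces $Q$ to be a cone, with some vertex $V$. To see that $V\notin F$, I would argue by projection from $V$: writing $F=Q\cap C$ with $C$ a cubic, each ruling line of $Q$ is not contained in $C$ and so meets $C$ in three points by B\'ezout. If $V$ lay on $F\subset C$, every such intersection would contain $V$, and the residual two points would describe a $g^1_2$ on $F$, contradicting non-hyperellipticity.

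Next, the lines of the ruling of $Q$ through $V$ cut out exactly the $g^1_3=|L|$ on $F$ (each ruling line meets $F$ in a degree-$3$ divisor via the intersection with $C$). Since $a_1+a_2+a_3\in|L|$, there is a ruling line $\ell\subset Q$ with $\ell\cdot F=a_1+a_2+a_3$, which places the three ramification points on a single line of the cone.

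For the tangency of $H^-$, I would exploit that $\pi^*\alpha\in H^0(\omega_F)$ is the linear equation defining $H^-$ and that its divisor of zeros on $F$ is $R=2a_1+2a_2+2a_3$, so $H^-\cdot F=R$. The three distinct points $a_1,a_2,a_3$ therefore lie in $\ell\cap H^-$, forcing $\ell\subset H^-$. Hence the plane conic $H^-\cap Q$ contains $\ell$ and splits as $\ell\cup\ell'$ for some line $\ell'\subset H^-$. Intersecting with $C$ gives
\begin{equation*}
2(a_1+a_2+a_3)\;=\;H^-\cdot F\;=\;\ell\cdot F+\ell'\cdot F\;=\;(a_1+a_2+a_3)+\ell'\cdot F,
\end{equation*}
so $\ell'\cdot F=a_1+a_2+a_3$; but three distinct collinear points lie on a unique line, which forces $\ell'=\ell$ and $H^-\cap Q=2\ell$, the algebraic statement that $H^-$ is tangent to $Q$ along $\ell$. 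The main subtlety is this last equality of divisors: it relies crucially on the three ramification points being \emph{distinct} and on $\pi^*\alpha$ vanishing to order exactly $2$ at each (because $n_j=3$), which is what produces the factor of $2$ on the left-hand side and makes the coincidence $\ell'=\ell$ unavoidable.
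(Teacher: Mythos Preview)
Your argument is correct and follows a genuinely different route from the paper's. The paper works entirely through the $G$-action: it invokes Proposition~\ref{PROP:QUACONE} to pin down the eigenspace containing $G_Q$, then in explicit coordinates $u_0,u_1,u_2,u_3$ adapted to the decomposition $H^0(\omega_F)=\langle u_0\rangle_1\oplus\langle u_1,u_2\rangle_\rho\oplus\langle u_3\rangle_{\rho^2}$ writes down the possible equations for $Q$ and reads off directly that $H^-=\{u_0=0\}$ meets $Q$ in a double line. The claim about the ramification points then drops out because $H^-\cap F$ is exactly the ramification locus. Your proof, by contrast, bypasses the representation theory: you use the identification $L\simeq\OO_F(a_1+a_2+a_3)$ (established inside the proof of Lemma~\ref{LEM:G13s}, though not in its statement) to place the $a_i$ on a ruling line $\ell$, and then a clean B\'ezout/intersection-cycle argument forces $H^-\cap Q=2\ell$. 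Your argument for $V\notin F$ (base point of the $g^1_3$ would yield a $g^1_2$) is also different from the paper's one-line observation that $F=Q\cap C$ would be singular at the cone vertex. What your approach buys is an elementary, coordinate-free proof that would apply to any genus-$4$ cover of an elliptic curve with three totally ramified points and a unique $g^1_3$ of the form $|a_1+a_2+a_3|$; what the paper's approach buys is consistency with the surrounding $G$-equivariant framework and an explicit normal form for $Q$ that feeds into the subsequent computation of $\Ker(\dPEv)$.
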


\begin{proof}
\noindent Recall that if the quadric $Q$ containing $F$ is smooth, then $F$ can be seen as a curve of bidegree $(3,3)$ in $\PP^1\times \PP^1$ and the projections on each factor give two different $g^1_3$. If, instead, $Q$ is a cone (these are the only possible cases as $F$ is non degenerate) there exists only one $g^1_3$. Hence, by Lemma \ref{LEM:G13s}, we can conclude that $Q$ is a cone. If $V$ is the vertex, it is clear that $V\not\in F$ as, otherwise $F$ would be singular.
\vspace{2mm}

\noindent Now we will prove that the ramification points are on a line in the canonical model of $F$. By what we have seen in this section we have a decomposition of $H^0(\omega_F)$ into subrepresentations with $H^0(\omega_F)_1=\pi^*H^0(\omega_E)$. We can assume, as before, that $H^0(\omega_F)_{\rho}$ has dimension $2$. Denote respectively with $\{u_0\}$, $\{u_1,u_2\}$ and $\{u_3\}$ a basis for $\pi^*H^0(\omega_E)$, $H^0(\omega_F)_{\rho}$ and $H^0(\omega_F)_{\rho^2}$. By abuse of notation we will write $u_iu_j$ to mean $u_i\spr u_j$. With these coordinates, the hyperplane $H^-$ has equation $u_0=0$ and $q^-=(1:0:0:0)$. As
\begin{equation}
\Sym^2(H^0(\omega_F))=\langle u_0^2,u_1u_3,u_2u_3\rangle_{1}\oplus
\langle u_0u_1,u_0u_2,u_3^2\rangle_{\rho}\oplus
\langle u_0u_3,u_1^2,u_2^2,u_1u_2\rangle_{\rho^2}
\end{equation}
We know by Proposition \ref{PROP:QUACONE} that an equation $G_Q$ of $Q$ is either an element of $\Sym^2(H^0(\omega_F))_{\rho}$ or of $\Sym^2(H^0(\omega_F))_{\rho^2}$. In the first case the generic element of $\Sym^2(H^0(\omega_F))_{\rho}$ is a quadric cone and has equation
$$u_3^2+u_0(au_1+bu_2)=0.$$
Moreover, it is easy to see that $H^-$ is tangent to the cone along the line $L_1=\{u_3=u_0=0\}$. In the second case the generic element of $\Sym^2(H^0(\omega_F))_{\rho^2}$ is a smooth quadric but it is easy to see that the generic singular element is a cone with equation
$$u_3u_0+(au_1+bu_2)^2=0.$$
As before, $H^-$ is a plane tangent to $Q$ along the line $L_2=\{u_0=au_1+bu_2=0\}$. So, in both cases, as the ramification points of the canonical curve $F$ are given exactly as $H^-\cap F$, they are on a line as claimed. 
\end{proof}



\noindent Now we are going to calculate the sum in Theorem \ref{THM:RESFORMULA} and to see that it is zero for each element in $\Ker(\dPEv)$. 

\begin{prop}
Let $\pi:F\rightarrow E$ be a Galois covering of degree $3$ as before and consider a local family of coverings with central fiber $\pi$. Then, if 
$$\nu(\beta)=\sum_{k=1}^3\dfrac{m(\beta)}{\pi^*\alpha^2}(x_k),$$
one has $\nu(\beta)=0$ for all $\beta\in \Ker(\dPEv)$.
\end{prop}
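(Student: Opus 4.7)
The plan is to exploit the $G$-equivariance of all the data in play, and to show that both $\nu$ and $d_oP_E^{\vee}$ annihilate the same natural subspace of $\Sym^2(H^0(\omega_F)^-)$; a dimension count will then force the two subspaces to coincide.

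\textbf{Step 1.} First I show that $\nu$ vanishes on every non-invariant isotypic component. Let $\beta$ lie in the $\chi$-eigenspace of $\Sym^2(H^0(\omega_F)^-)$ for some character $\chi\neq \mathbf{1}$. Since $m$ is $G$-equivariant and $\pi^*\alpha$ is $G$-invariant (being pulled back from $E$, on which $G$ acts trivially), the meromorphic function $f_\beta := m(\beta)/\pi^*\alpha^2$ on $F$ satisfies $g^*f_\beta = \chi(g)\,f_\beta$ for every $g\in G$. Because $c$ is not a branch point and $\pi$ is Galois of degree $3$, the fiber $\pi^{-1}(c) = \{x_1,x_2,x_3\}$ is a single free $G$-orbit, so for any $g\in G$
\[\nu(\beta)=\sum_{k=1}^{3} f_\beta(x_k)=\sum_{k=1}^{3} f_\beta(g\cdot x_k)=\chi(g)\sum_{k=1}^{3} f_\beta(x_k)=\chi(g)\,\nu(\beta),\]
which forces $\nu(\beta)=0$ as soon as $\chi(g)\neq 1$ for some $g$.

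\textbf{Step 2.} Next I check that $\Ker(\dPEv)$ coincides with the direct sum of the non-invariant isotypic components of $\Sym^2(H^0(\omega_F)^-)$. The codomain $T_o^{\vee}\cH_E = \bigoplus_{j=1}^{3}\C\,dt_j$ carries the trivial $G$-representation: since $E=F/G$, the points $b_j$ are fixed by $G$, and the local coordinates $t_j$ on $E$ near $b_j$ are $G$-invariant; one verifies this via the characteristic map $\tau$ used in Proposition \ref{PROP:DH1}, noting that $\tau(\partial/\partial t_j)=\partial/\partial w_j$ is $G$-invariant at $a_j$. Being $G$-equivariant into a trivial representation, $d_oP_E^{\vee}$ annihilates every non-trivial isotypic summand of its source. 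From $H^0(\omega_F)^- = H^0(\omega_F)_\rho \oplus H^0(\omega_F)_{\rho^2}$, of dimensions $2$ and $1$, one computes
\[\Sym^2(H^0(\omega_F)^-) = \Sym^2 H^0(\omega_F)_\rho \,\oplus\, \bigl(H^0(\omega_F)_\rho\otimes H^0(\omega_F)_{\rho^2}\bigr) \,\oplus\, \Sym^2 H^0(\omega_F)_{\rho^2}\]
of isotypic characters $\rho^2,\mathbf{1},\rho$ and dimensions $3,2,1$ respectively; the non-invariant part has total dimension $4$. By Equation (\ref{EQ:DIMPED}) with $g=4$ and $n=3$, $\dim\Ker(\dPEv)=\frac{g(g-1)}{2}-n+1=4$. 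A $4$-dimensional subspace of a $4$-dimensional space must equal it.

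Combining the two steps yields $\nu(\beta)=0$ for all $\beta\in\Ker(\dPEv)$, as claimed. The main subtlety is the second step, specifically the assertion that $T_o^{\vee}\cH_E$ is a trivial $G$-module: it is intuitive but requires unpacking the construction of the local family and of its characteristic map.
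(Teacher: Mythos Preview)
Your proof is correct and shares the paper's overall architecture: Step 1 (that $\nu$ vanishes on the non-invariant isotypic components via the free $G$-action on $\pi^{-1}(c)$) is identical to the paper's opening argument, and both proofs finish by the same dimension count $\dim\Ker(d_oP_E^{\vee})=4=\dim\bigl(\Sym^2(H^0(\omega_F)^-)_{\rho}\oplus\Sym^2(H^0(\omega_F)^-)_{\rho^2}\bigr)$.

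The genuine difference is in how the inclusion
\[
\Sym^2(H^0(\omega_F)^-)_{\rho}\oplus\Sym^2(H^0(\omega_F)^-)_{\rho^2}\subseteq\Ker(d_oP_E^{\vee})
\]
is established. The paper computes explicitly: it writes the local expansion of an eigenform $\varphi$ near a ramification point (e.g.\ $\varphi|_U=(\varphi_0+\varphi_1z^3+\dots)dz^2$ for $\mu=\rho$, or $z^2(\dots)dz^2$ for $\mu=\rho^2$), divides by $\pi^*\alpha=3z^2\,dz$, and checks by inspection that the residue vanishes. You instead argue that $d_oP_E^{\vee}$ is $G$-equivariant into a trivial module, which forces it to annihilate every non-trivial isotypic summand; your justification via the characteristic map is the right one, since the $a_j$ are totally ramified and hence $G$-fixed, so $\tau(\partial/\partial t_j)=\partial/\partial w_j\in H^0(\mathcal{R})$ is $G$-invariant, whence $KS_E(\partial/\partial t_j)\in H^1(T_F)^G$ and the Serre-duality pairing with any $\chi\neq\mathbf{1}$ eigenvector vanishes. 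Your route is cleaner and would generalize verbatim to other totally ramified cyclic Galois covers; the paper's explicit residue computation is more hands-on and avoids having to check that the family construction is $G$-equivariant, the point you correctly flag as the main subtlety.
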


\begin{proof}
$\Ker(\dPEv)$ is a subspace of $\Sym^2(H^0(\omega_F)^-)$. If $\beta\in \Ker(\dPEv)$ we can decompose it as
$$\beta=\beta_1+\beta_{\rho}+\beta_{\rho^2}$$
with $\beta_{\mu}\in \Sym^2(H^0(\omega_F)^-)_{\mu}$. First of all we will prove that $\nu(\beta_\rho)=\nu(\beta_{\rho^2})=0$.
\vspace{2mm}

\noindent As $c$ is not a branch point, we have that the fiber $\pi^{-1}(c)=\{x_1,x_2,x_3\}$ over $c$ is equal to the orbit of each of its points: $\pi^{-1}(c)=\{x_1,g(x_1),g^2(x_1)\}$.
Hence

$$\nu(\beta)=\sum_{k=1}^3\dfrac{m(\beta)}{\pi^*\alpha^2}(x_k)=\sum_{k=0}^2\dfrac{m(\beta)}{\pi^*\alpha^2}(g^k(x_1))=\sum_{k=0}^2\dfrac{m(\beta\circ g^k)}{\pi^*\alpha^2}(x_1).$$
If we assume that $\beta$ is in the eigenspace $\Sym^2(H^0(\omega_F))_{\mu}$ of $g^*$ then
$$\nu(\beta)=\dfrac{m\left(\sum_{k=1}^d(g^*)^k(\beta)\right)}{\pi^*\alpha^2}(x_1)
=\dfrac{m\left(\sum_{k=0}^2\mu^k\beta\right)}{\pi^*\alpha^2}(x_1)
=\left(\sum_{k=0}^2\mu^k\right)\dfrac{m(\beta)}{\pi^*\alpha^2}(x_1).$$
Hence, if $\mu\neq 1$, we have $\lambda(\beta)=0$ as claimed.
\vspace{2mm}

\noindent Hence we have that $\nu(\beta)=\nu(\beta_1)$ so it is enough to prove that 
$$\ker(\dPEv)\subseteq \Sym^2(H^0(\omega_F)^-)_{\rho}\oplus\Sym^2(H^0(\omega_F)^-)_{\rho^2}$$
i.e., that $\beta_1=0$.
\vspace{2mm}

\noindent Let $a$ be a ramification point and consider holomorphic coordinates $(U,z)$ centered in $a$ and $(V,w)$ centered in $\pi(a)=b$. Assume, moreover, that $\alpha|_V=dw$, the relation $w=z^3$ holds and the action of $g\in G$ near $a$ is given by $z\mapsto \rho z$ for $\rho\neq 1$ such that $\rho^3=1$. By changing $\rho$ with $\rho^2$ we can assume, moreover, that the decomposition of $H^0(\omega_F)$ in invariant subspaces with respect to the action of $G$ is the one given in Equation (\ref{EQ:DECH0}). Consider $\eta\in H^0(\omega_F)$. Near $a$ we can write
$$\eta|_U=\left(\sum_{j\geq 0}\eta_jz^j\right)dz\quad \mbox{ and }\quad g^*\eta|_U=\rho\left(\sum_{j\geq 0}\eta_j\rho^jz^j\right)dz.$$ 
In particular, $\eta\in H^0(\omega_F)^G$ if and only if, near $a$ we have
$$\eta|_U=\left(\sum_{j\geq 0}\eta_{2+3j}z^{2+3j}\right)dz$$
and an analogous decomposition holds near the other ramification points. Similarly, we have
$$\eta|_U=\left(\sum_{j\geq 0}\eta_{3j}z^{3j}\right)dz\quad\mbox{ and }\quad \eta|_U=\left(\sum_{j\geq 0}\eta_{1+3j}z^{1+3j}\right)dz$$
if $\eta\in H^0(\omega_F)_{\rho}$ and $\eta\in H^0(\omega_F)_{\rho^2}$ respectively. 
\vspace{2mm}

\noindent As
$$\Sym^2(H^0(\omega_F)^-)_{1}=H^0(\omega_F)_{\rho}\otimes H^0(\omega_F)_{\rho^2},\quad  \Sym^2(H^0(\omega_F)^-)_{\rho}=H^0(\omega_F)_{\rho^2}^{\otimes 2},$$
and
$$\Sym^2(H^0(\omega_F)^-)_{\rho^2}=\Sym^2(H^0(\omega_F)_{\rho}),$$
if $\varphi\in \Sym^2(H^0(\omega_F)^-)_\mu$ 
we can write it in coordinate near $a$ as
$$\varphi|_U=z\left(\varphi_0+\varphi_1z^3+o(z^5)\right)dz^2$$
for $\mu=1$ and as
$$\varphi|_U=\left(\varphi_0+\varphi_1z^3+o(z^5)\right)dz^2\quad \mbox{ and }\quad \varphi|_U=z^2\left(\varphi_0+\varphi_1z^3+o(z^5)\right)dz^2$$
if $\mu=\rho$ and $\mu=\rho^2,$ respectively. In the latter cases, we have that the residue of $\varphi/\pi^*\alpha$ in $a$ is $0$ as $\varphi/\pi^*\alpha$ is either holomorphic or has a pole of order $2$ with coefficient of degree $-1$ equal to $0$. Hence
$$\Sym^2(H^0(\omega_F)^-)_{\rho}\oplus \Sym^2(H^0(\omega_F)^-)_{\rho^2}\subseteq \Ker(\dPEv).$$
By Theorem \ref{THM:KERDPHIE} and using Diagram (\ref{EQ:BIGDIAG}) we obtain $\dim(\Ker(\dPEv))=4$. This is equal to the dimension of $\Sym^2(H^0(\omega_F)^-)_{\rho}\oplus \Sym^2(H^0(\omega_F)^-)_{\rho^2}$ so
$$\Sym^2(H^0(\omega_F)^-)_{\rho}\oplus \Sym^2(H^0(\omega_F)^-)_{\rho^2}=\Ker(\dPEv).$$
In particular, $\beta_1=0$ and $\nu(\beta)=0$ as claimed.

\end{proof}


\end{document}